\documentclass{amsart}
\usepackage{amssymb}
\usepackage{amsmath}
\usepackage{mathrsfs}
\usepackage{amsfonts}
\usepackage{amscd}
\usepackage{pifont}
\usepackage{txfonts}
\usepackage{dsfont}
\usepackage{color}

\newtheorem{theorem}{Theorem}[section]
\newtheorem{proposition}[theorem]{Proposition}
\newtheorem{lemma}[theorem]{Lemma}
\newtheorem{corollary}[theorem]{Corollary}

\theoremstyle{definition}

\newtheorem{remark}[theorem]{Remark}
\theoremstyle{remark}

\numberwithin{equation}{section}

\begin{document}

\title[Hessian equations]{Local solvability\\
of the $k$-Hessian equations}

\author[G. Tian, Q. Wang \& C.-J. Xu]{G. Tian, Q. Wang and C.-J. Xu}

\address{Guji Tian, Wuhan Institute of Physics and
Mathematics,Chinese Academy of
 Sciences,430072 Wuhan, P.R. China,}
\email{tianguji@wipm.ac.cn}

\address{Qi Wang,  School of Mathematics and Statistics, Wuhan University 430072,
Wuhan, P. R. China}
\email{qiwang88@whu.edu.cn}

\address{Chao-Jiang Xu,  School of Mathematics and Statistics, Wuhan University 430072,
Wuhan, P. R. China\\
Universit\'e de Rouen, CNRS UMR 6085, Laboratoire de Math\'ematiques, 76801 Saint-Etienne du Rouvray, France}
\email{chao-jiang.xu@univ-rouen.fr}

\date{\today}

\subjclass[2000]{ 35J60; 35J70}

\keywords{$k$-Hessian equations, local solution, uniform ellipticity, Nash-Moser iteration.}

\maketitle

\begin{abstract}
In this work, we study the existence of local solutions in $\mathbb{R}^{n}$ to $k$-Hessian equation,
for which the nonhomogeneous term $f$ is permitted to change the sign or be non negative; if  $f$ is $C^\infty,$  so is the local solution. We also give a classification for the second order polynomial solutions to the $k-$Hessian equation, it is the basis to construct the local solutions and obtain the uniform ellipticity of the linearized operators at such constructed local solutions.

\end{abstract}

\section{Introduction}
In this paper, we focus on the existence of local  solution for the following $k$-Hessian equation,
\begin{equation}\label{eq:1.2}
S_{k}[u]=f(y,u,Du)\, ,
\end{equation}
on an open domain $\Omega$ of $\mathbb{R}^{n}$, where $2\leq k\le n$.
For a smooth function  $u\in C^2$, the $k$-Hessian operator $S_{k}$ is defined by
\begin{equation}\label{eq:1.1}
S_{k}[u]=S_k(D^2u)=\sigma_{k}(\lambda(D^{2}u))=\sum_{1\leq i_{1}<
i_{2}\ldots<i_{k}\leq
n}\lambda_{i_{1}}\lambda_{i_2}\ldots\lambda_{i_{k}},
\end{equation}
where $\lambda(D^{2}u)=(\lambda_{1},\ldots,\lambda_{n})$  are the
eigenvalues of the Hessian matrix $(D^{2}u)$, $\sigma_{k}(\lambda)$ is the $k-$th
elementary symmetric polynomial, and $S_k[u]$ is the sum of all principal minors of
order $k$ for the Hessian matrix $(D^2u)$ . We say that a smooth function $u$ is $k$-convex
if  the eigenvalues of the Hessian matrix $(D^{2}u)$ are in the so-called G{\aa}rding cone $\Gamma_{k}$ which is defined by
$$
\Gamma_{k}(n)=\{\lambda=(\lambda_1,\lambda_2,\ldots,\lambda_n)\in\mathbb{R}^{n}:
\sigma_j(\lambda)>0,1\leq j\leq k\}.
$$

For the local solvability, Hong and Zuily \cite{HZ} obtained the existence of $C^{\infty}$ local solutions for
Monge-Amp\'{e}re equation
\begin{equation}\label{eq:1.4}
\det D^{2}u=f(y,u,Du)\indent y\in \Omega\subset\mathbb{R}^{n},
\end{equation}
when $f\in C^\infty$ is nonnegative, it is the case of $k=n$ in \eqref{eq:1.2}. The geometric background of
Monge-Amp\`ere equation can be found in \cite{HH,GS,Lin1}.  In this work, we only consider the Hessian
equation for  $2\leq k< n,$ since it is classical for k=1. We will follow the method
of  \cite{HZ} (see also  \cite{Lin,TWX}) to construct the local solution by a  perturbation of
the polynomial-typed solution of $S_{k}[u]=c$ for some real constant $c$. Since the right hand side function in
\eqref{eq:1.2}  possibly vanishes,  then, the solution is in the closure of $\Gamma_{k}$. Thus, we need to study the closure of $\Gamma_{k}$, its boundary is
$$
\partial\Gamma_{k}(n)=\{\lambda\in \mathbb{R}^{n}: \sigma_{j}(\lambda)\geq0
, \sigma_{k}(\lambda)=0,1\leq j\leq k-1\}.
$$
From the Maclaurin's inequalities
$$
\left[\frac{\sigma_{k}(\lambda)}{\big(^{n}_{k}\big)}\right]^{1/k}\leq
\left[\frac{\sigma_{l}(\lambda)}{\big(^{n}_{l}\big)}\right]^{1/l},\ \
\lambda\in \Gamma_{k}, \,k\geq l\geq 1,
$$
we see that $\sigma_{k+1}(\lambda)>0$
cannot occur for $\lambda\in\partial \Gamma_{k}(n)$, therefore
we can express $\partial\Gamma_{k}$ as two parts
$$
\partial\Gamma_{k}(n)= \mathbf P_{1}\cup \mathbf P_{2},
$$
with
\begin{align*}
& \mathbf P_{1}=\{\lambda\in \Gamma_{k}(n):\sigma_{j}(\lambda)\geq0,\,\sigma_{k}(\lambda)=\sigma_{k+1}(\lambda)=0,1\leq j\leq k-1\}\\
& \mathbf P_{2}=
\{\lambda\in \Gamma_{k}(n):\sigma_{j}(\lambda)\geq0,\,\sigma_{k}(\lambda)=0,\,\sigma_{k+1}(\lambda)<0,1\leq j\leq k-1\}.
\end{align*}
Besides,
\begin{equation}\label{eq:1.7}
\mathbf{P}_{2}=\emptyset, \ \  \text{if} \ \ k=n.
\end{equation}
In Section 2, we will prove that $\mathbf P_{1}$ and $\mathbf P_{2}$ have a more precise version, and $\mathbf P_{2}\neq \emptyset$ if $k<n.$

In this paper, we always discuss the $k$-Hessian equation under the framework of ellipticity,  then we follow the ideas of \cite{Ivo} and \cite{IPY} to get the existence of the solution. Here we explain the ellipticity: in the matrix language, the ellipticity set of the $k$-Hessian operator, $1\le k\le n,$ is
$$
E_k=\left\{S\in \mathcal{M}_s(n) : S_{k}(S+t\,\xi\times\xi)>S_{k}(S)>0, \xi\in \mathbb{S}^{n-1}, \forall  t\in \mathbb{R}^+\right\},
$$
where $\mathcal{M}_s(n)$ is the space of $n$-symmetric real matrix.  Then the G{\aa}rding cones is
$$
\Gamma_k=\left\{S\in \mathcal{M}_s(n): S_{k}(S+t\, \mathbf{I})>S_{k}(S)>0, \forall t\in \mathbb{R}^+\right\}.
$$
It is possible to show that $E_k=\Gamma_k$ only for $k=1,n$ and the
example  in \cite{IPY} assures that $\Gamma_k\subset E_k$ and
$mess(E_k\setminus \Gamma_k)>0.$ Ivochkina, Prokofeva and Yakunina
\cite{IPY} pointed out that the ellipticity of \eqref{eq:1.2} is
independent of the sign of $f$ if $k<n$. But for the Monge-Amp\`ere equation \eqref{eq:1.4}, the type of equation is
determined by the sign of $f$, it is elliptic  if $f>0$,
hyperbolic if $f<0$ and degenerate if $f$ vanishes at some points; it is of mixed
type if $f$ changes sign \cite{Han}.

There are many results for the Dirichlet problem of \eqref{eq:1.2} under the condition $f> 0$
(see \cite{W} and references therein), there are also some results about $C^{1, 1}$ weak solution of the Dirichlet problem
of \eqref{eq:1.2} under the degenerate condition $f\geq 0$ (see \cite{WX} and references therein).
But  similarly to Monge-Amp\`ere equation, the existence of the smooth solution to Dirichlet problem of \eqref{eq:1.2} is completely
an open problem if $f$ is not strictly positive.

In this work, for the  local solution of the $k$ -Hessian equation \eqref{eq:1.2}, we prove the following results.

\begin{theorem}\label{th:Main1}
Let $f=f(y,u,p)$ be defined and continuous near a point $Z_0=(y_0, u_0, p_0)\in \mathbb{R}^n\times\mathbb{R}\times\mathbb{R}^{n}$,  $0<\alpha<1, 2\le k<n$. Assume that $f$ is $C^\alpha$ with respect to $y$ and  $C^{2,1}$ with respect to $u,p.$   We have that
\begin{itemize}
 \item [(1)] If $f(Z_0)=0$, then the equation \eqref{eq:1.2}  admits  a $(k-1)$ - convex local solution $u\in
C^{2, \alpha}$ near $y_0$  which is not $(k+1)$ - convex.

\item [(2)]  If $f\geq 0$ near $Z_0$, then the equation \eqref{eq:1.2}  admits  a $k$ -convex local solution $u\in
C^{2, \alpha}$ near $y_0$  which  is not $(k+1)$ - convex.

\item [(3)]  If $f(Z_0)<0$,  the equation \eqref{eq:1.2} admits  a $(k-1)$-convex local solution $u\in
C^{2, \alpha}$ near $y_0$  which  must not be $k$ - convex.
\end{itemize}
Moreover, in all the case above,  the linearized operator of \eqref{eq:1.2} at $u$ is uniformly elliptic,
and if $f\in C^{\infty}$ near $Z_0$,  then the local solution above is $C^\infty$ near $y_0$.
\end{theorem}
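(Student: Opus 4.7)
The plan is to realize $u$ as a small perturbation of an explicit quadratic polynomial $P(y)=\tfrac12(y-y_0)^T A(y-y_0)+p_0\cdot(y-y_0)+u_0$ whose Hessian $A$ is tailored to the sign regime of $f$ at $Z_0$. First I would normalize by translation in $y$ and subtraction of an affine function so that $y_0=0$, $u_0=0$, $p_0=0$. Setting $c:=f(Z_0)$, the task is to pick a symmetric matrix $A$ with $S_k(A)=c$, with $\lambda(A)$ lying in the precise subset of $\overline{\Gamma_k}$ (or of $E_k\setminus\Gamma_k$) dictated by the theorem, \emph{and} such that the linearized coefficient matrix $S_k^{ij}(A)$ is positive definite. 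The existence of such $A$ is exactly what the classification of second-order polynomial solutions in Section~2 is designed to provide: in Case (1), $\lambda(A)\in \mathbf{P}_1$ with the $\sigma_{k+1}=0$ condition ruling out $(k+1)$-convexity while the stability property of $\mathbf{P}_1$ keeps $S_k^{ij}(A)>0$; in Case (2), one can take $\lambda(A)$ in $\Gamma_k$ (or on $\partial\Gamma_k\cap\{\sigma_k>0\text{ recoverable}\}$ when $c=0$); in Case (3), the classification gives $\lambda(A)\in E_k\setminus\Gamma_k$ with $S_k(A)=c<0$ and linearization still uniformly elliptic, which is possible precisely because $E_k\supsetneq\Gamma_k$ when $k<n$ (cf.\ \cite{IPY}).

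Once $A$ is fixed, write $u=P+v$ and rewrite \eqref{eq:1.2} as
\begin{equation*}
\Phi(v):=S_k[P+v]-f\bigl(y,P+v,DP+Dv\bigr)=0,
\end{equation*}
for $v$ small in a neighborhood of $0$. At $v=0$ we have $\Phi(0)(y)=c-f(y,P(y),DP(y))$, and $\Phi(0)(0)=0$ by choice of $c$; the linearization
\begin{equation*}
D\Phi(0)w = S_k^{ij}(A)\partial_{ij}w - f_u(Z_0)\, w - f_{p_i}(Z_0)\partial_i w + O(|y|)\cdot(\text{derivatives of }w)
\end{equation*}
is, by construction, a uniformly elliptic second-order linear operator on a small ball $B_\rho$. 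I would then solve $\Phi(v)=0$ in a rescaled coordinate $y=\rho x$ to gain a small parameter $\rho$ in front of the nonlinear remainder and of the $y$-dependence of $f$. After rescaling, $\Phi$ becomes a small perturbation of a constant-coefficient uniformly elliptic operator on the unit ball, and existence of $v$ with $\|v\|_{C^{2,\alpha}}$ controlled by $\rho^{\alpha}$ follows from a Nash--Moser / implicit function theorem argument in Hölder spaces (for instance, as in \cite{HZ,TWX}): one inverts the linearization by Schauder theory with zero Dirichlet data on $\partial B_1$, then applies a contraction mapping to the quadratic remainder, using the $C^{2,1}$ dependence of $f$ on $(u,p)$ and the $C^\alpha$ dependence on $y$ for the required estimates.

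Having produced $v$, uniform ellipticity of the linearized operator at $u=P+v$ follows from uniform ellipticity at $P$ together with the $C^{2,\alpha}$-smallness of $v$, which keeps $S_k^{ij}(D^2 u)$ close to $S_k^{ij}(A)$. The cone membership of $\lambda(D^2u)$ in the prescribed part of $\overline{\Gamma_k}$ (resp.\ $E_k\setminus\Gamma_k$) is preserved by the same smallness. For $C^\infty$ regularity when $f\in C^\infty$, standard elliptic bootstrap applied to the now uniformly elliptic linear equation satisfied by $v$ upgrades $v\in C^{2,\alpha}$ to $v\in C^\infty$.

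The main obstacle I anticipate is \textbf{not} the iteration itself but the preliminary algebraic step: exhibiting a quadratic polynomial $P$ that simultaneously (i) attains the prescribed value of $S_k$, (ii) sits in the exact cone required to force the stated convexity class of the solution and exclude $(k+1)$-convexity, and (iii) yields a \emph{uniformly} elliptic linearization. For Case (3), where the Hessian must leave $\Gamma_k$ while the linearization stays positive definite, this rests essentially on the non-triviality of $E_k\setminus\Gamma_k$ for $k<n$ and on choosing eigenvalues of $A$ explicitly within the region delineated by the classification in Section~2; the delicate point is that approaching $\partial E_k$ would destroy ellipticity of the linearization, so quantitative control on the distance $\lambda(A)$ from $\partial E_k$ must be built into the classification and propagated through the perturbation argument.
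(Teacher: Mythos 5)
Your overall architecture (normalize $Z_0$ to the origin, choose an explicit quadratic polynomial $P$ with prescribed $S_k(P)=f(Z_0)$ and with uniformly elliptic linearization, perturb by a rescaled correction $v$, and close via a Nash--Moser/Newton scheme in H\"older spaces, then bootstrap for smoothness) matches the paper's strategy, and you correctly identify the algebraic choice of $\lambda(A)$ as the crux. However, that is precisely where your proposal goes wrong in Case (1): you choose $\lambda(A)\in\mathbf{P}_1$, i.e.\ $\sigma_k(\lambda)=\sigma_{k+1}(\lambda)=0$, and assert that ``the stability property of $\mathbf{P}_1$ keeps $S_k^{ij}(A)>0$.'' This is false. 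The paper's Theorem \ref{1th} and Theorem \ref{2th}(2) show that on $\mathbf{P}_1$ the conditions $\sigma_k=\sigma_{k+1}=0$ force $\sigma_j(\lambda)=0$ for all $j\ge k$, so $\lambda\in\overline{\Gamma}_n(n)$, the linearization is only \emph{degenerate} elliptic, and in fact $\sigma_{k-1;i}(\lambda)=0$ for $1\le i\le k-1$. The correct choice in Cases (1) and (2) is $\lambda(A)\in\mathbf{P}_2$ (where $\sigma_k(\lambda)=0$ but $\sigma_{k+1}(\lambda)<0$): Theorem \ref{1th}(I) proves that there $\sigma_{k-1;i}(\lambda)>0$ for all $i$, which is what makes the linearization uniformly elliptic. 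This distinction $\mathbf{P}_1$ vs.\ $\mathbf{P}_2$ is the central discovery of Section \ref{section2}; with $\mathbf{P}_1$, your linearization is degenerate, the Schauder theory does not give a bounded inverse, and the iteration cannot close in $C^{2,\alpha}$. Choosing $\sigma_{k+1}<0$ also makes the ``not $(k+1)$-convex'' conclusion stable under the perturbation, whereas $\sigma_{k+1}=0$ does not: a small perturbation could land the solution on either side.

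In Case (3) your conceptual appeal to $E_k\setminus\Gamma_k$ is only heuristic and stops short of the needed construction. The paper does \emph{not} run the perturbation inside $E_k\setminus\Gamma_k$ directly; instead Lemma \ref{lemma-3th} and Theorem \ref{3th} explicitly produce $\lambda\in\Gamma_{k-1}(n)$ with $\sigma_k(\lambda)=c<0$ and $0<\sigma_{k-1;1}(\lambda)\le\cdots\le\sigma_{k-1;n}(\lambda)$, which gives the quantitative uniform ellipticity you acknowledge you would need but do not supply. Your worry about ``approaching $\partial E_k$'' is legitimate, but you give no mechanism to avoid it; the paper's construction (starting from an element of $\mathbf{P}_2\subset\partial\Gamma_{k-1}(n-1)$, then bumping by $t>0$ and rescaling by $s>0$) is exactly that mechanism. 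Finally, your description of the iteration glosses over a real obstacle the paper has to handle: the zeroth-order coefficient $a=-\varepsilon^4 f_u$ of the linearization need not be nonpositive, so the standard Dirichlet solvability does not apply directly; the paper exploits the smallness $a=O(\varepsilon^4)$ together with a Fredholm alternative to recover existence and uniform Schauder estimates. Without addressing this, ``invert the linearization by Schauder theory with zero Dirichlet data'' is not justified.
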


\begin{remark} 1) Without loss of generality, by a translation $y\rightarrow y-y_0$ and replacing
$u$ by $u-u(0)-y\,\cdot\,Du(0),$ we can assume $Z_0=(0, 0, 0)$ in Theorem \ref{th:Main1},
then the local solution in the above Theorem  \ref{th:Main1} is of  the following form
\begin{equation}\label{eq:1.5}
  u(y)=\frac{1}{2}\sum^{n}_{i=1}\tau_{i}y_{i}^{2}
+\varepsilon'\varepsilon^{4}w(\varepsilon^{-2}y),
\end{equation}
with  arbitrarily fixed $(\tau_{1},\tau_{2},\ldots,\tau_{n})\in \mathbf{P}_{2}$ in the cases of (1) and (2).
In the case of (3), we take some special $ (\tau_{1},\tau_{2},\ldots,\tau_{n})\in \Gamma_{k-1}$.
In \eqref{eq:1.5}, we always take $\varepsilon>0$  and
\begin{equation}\label{eq:1.5BB}
\varepsilon'=\left\{\begin{array}{ll}
\varepsilon^\alpha, &\quad 0<\alpha\le \frac{1}{2}\\
\varepsilon, &\quad  \frac 12<\alpha<1,\end{array}\right.
\end{equation}
then \eqref{eq:1.5} is of the same form as the solution in \cite{HZ,Lin,TWX}, where $f$ has good smoothness.

2) Notice that, in Case (1) of  Theorem \ref{th:Main1},  $f$ is permitted to change sign near $Z_0$.

3) If $f=f(y,u,p)$ is independent of $u$ and $p$, then the assumption on $f$ is reduced to $f=f(y)\in C^\alpha$, which is a necessary requirement on $f$  for the classical Schauder theory.
The condition that $f$ is $C^{2,1}$ with respect to $u$ and $p$ is  a technical one to meet the need
for tackling with the quadratic error in Nash-Moser iteration (see \eqref{eq:7.15}).

4) In Theorem \ref{th:Main1}, we consider only the $k$-Hessian equation with $2\le k<n$, since the
Monge-Amp\`ere equation \eqref{eq:1.4} is considered by \cite{Han,HZ}.
\end{remark}

This article consists of three sections besides the introduction. In
Section \ref{section2}, we give  a classification of the polynomial-typed solutions for $S_{k}[u]=c$ for some real
constant $c$.  Such a  classification will assure the ellipticity of linearized operators at each polynomial. The results  of this section given also a good understanding for the structure of solutions to $k$- hessian equation. In Section \ref{section3}, Theorem \ref{th:Main1} is proved by Nash-Moser iteration. Section \ref{section4} is an appendix in which three equivalent definitions for G{\aa}rding cone
are given and proved.


\section{A classification of  polynomial solutions}\label{section2}

For $\lambda\in \mathbb{R}^n$, set $ \psi(y)=\frac{1}{2}\sum_{i=1}^n\lambda_iy_i^2$,  then
\begin{equation}\label{2.1}
S_k[\psi]=\sigma_k(\lambda)=c,
\end{equation}
where $c$ is a real constant. The linearized operators of $S_k[\,\cdot\,]$ at $\psi$ is
\begin{equation}\label{2.2}
\mathcal{L}_\psi=\sum_{i=1}^n\sigma_{k-1,i}(\lambda)\partial_i^2,
\end{equation}
where $\sigma_{k-1,i}(\lambda)$, furthermore, $\sigma_{l ,i_1, i_2, \cdots, i_s}(\lambda)$, is defined in \eqref{eq:2.10}. To give a classification of polynomial solutions to equation \eqref{2.1},  we recall a results of Section 2 of \cite{W}.

\begin{proposition}\label{prop2.1}(See \cite{W})
Assume that $\lambda\in\overline{\Gamma}_{k}(n)$ is in descending order,
\begin{itemize}
\item[(i)]  then we have
$$
\lambda_{1}\geq \cdots\lambda_{k}\geq \cdots\lambda_{p}\ge 0\geq
\lambda_{p+1}\geq \cdots\lambda_{n}
$$
with $p\geq k$.
\item[(ii)] we have
\begin{equation}\label{eq:2.17}
0\le\sigma_{k-1;1}(\lambda)\leq \sigma_{k-1;2}(\lambda)\leq
\cdots\leq \sigma_{k-1,n}(\lambda).
\end{equation}
\item[(iii)] For any $\{i_{1},i_{2},\cdots i_{s}\}\subset\{1,2,\ldots,n\}$ with $l+s\leq
k$, we have
\begin{equation}\label{eq:2.18}
\sigma_{l;i_{1},i_{2}\cdots i_{s}}(\lambda)\ge 0.
\end{equation}
\end{itemize}
\end{proposition}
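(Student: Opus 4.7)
The plan is to prove the three parts in the order (i), (iii), (ii), since part (ii) reduces to (iii) by a single algebraic identity while (i) and (iii) require distinct classical tools.

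For part (i), I would consider the real-rooted polynomial
$$P(t)=\prod_{i=1}^n(t+\lambda_i)=\sum_{j=0}^n\sigma_j(\lambda)\,t^{n-j},$$
whose strictly positive roots correspond exactly to the strictly negative $\lambda_i$. Because $\lambda\in\overline{\Gamma}_k(n)$ forces $\sigma_0=1,\sigma_1,\ldots,\sigma_k\ge 0$, the coefficient sequence $(1,\sigma_1,\ldots,\sigma_n)$ has no sign change among its first $k+1$ nonzero entries and therefore at most $n-k$ sign changes overall. Descartes' rule of signs, which is an equality for polynomials with only real roots, then bounds the number of positive roots of $P$ by $n-k$, so at most $n-k$ of the $\lambda_i$ are negative; hence $p\ge k$.

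For part (iii), the central identification is
$$\sigma_{l;i_1,\ldots,i_s}(\lambda)=\partial_{i_1}\cdots\partial_{i_s}\sigma_{l+s}(\lambda)\qquad\text{for distinct }i_1,\ldots,i_s,$$
which is immediate from the definition of the elementary symmetric polynomials. Since $l+s\le k$ gives $\overline{\Gamma}_k\subset\overline{\Gamma}_{l+s}$, the point $\lambda$ lies in the closed G{\aa}rding cone of $\sigma_{l+s}$, viewed as a polynomial hyperbolic with respect to $\mathbf{1}=(1,\ldots,1)$. Each standard basis vector $e_{i_j}$ also lies in $\overline{\Gamma}_{l+s}$ because $\sigma_1(e_{i_j})=1$ and $\sigma_m(e_{i_j})=0$ for $m\ge 2$. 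G{\aa}rding's theorem — non-negativity of the polarization of a hyperbolic polynomial at vectors in its closed hyperbolicity cone — then yields $\partial_{i_1}\cdots\partial_{i_s}\sigma_{l+s}(\lambda)\ge 0$, which is \eqref{eq:2.18}.

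For part (ii), expanding $\sigma_{k-1;j}$ and $\sigma_{k-1;j+1}$ with respect to the coordinate that the other one omits produces the identity
$$\sigma_{k-1;j}(\lambda)-\sigma_{k-1;j+1}(\lambda)=(\lambda_{j+1}-\lambda_j)\,\sigma_{k-2;j,j+1}(\lambda).$$
The first factor is non-positive since $\lambda$ is in descending order, and part (iii) applied with $l=k-2,\,s=2$ gives $\sigma_{k-2;j,j+1}(\lambda)\ge 0$; hence $\sigma_{k-1;j}\le\sigma_{k-1;j+1}$. The starting lower bound $\sigma_{k-1;1}(\lambda)\ge 0$ is (iii) with $l=k-1,\,s=1$, completing the chain \eqref{eq:2.17}.

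The main obstacle I anticipate is the G{\aa}rding step in (iii): one must verify cleanly that the coordinate partial derivatives match the polarization up to a universal positive factor, and that $\sigma_{l+s}$ really does have $\Gamma_{l+s}$ as its hyperbolicity cone in the sense required. Once those standard facts from the theory of hyperbolic polynomials are in place, everything else is elementary and essentially combinatorial.
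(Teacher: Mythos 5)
The paper does not prove Proposition \ref{prop2.1}; it cites it from Wang's lecture notes \cite{W}, so there is no in-paper proof to compare against. Your argument is correct, modulo the one point you flagged yourself, and it fits naturally with the machinery the paper develops in its appendix. A few remarks. For (i), you do not actually need the \emph{tightness} of Descartes' rule for real-rooted polynomials: the ordinary one-sided bound already shows the coefficient sequence $(1,\sigma_1,\ldots,\sigma_n)$ of $P(t)=\prod_i(t+\lambda_i)$ has at most $n-k$ sign changes, hence $P$ has at most $n-k$ positive roots, hence at most $n-k$ of the $\lambda_i$ are negative. Moreover (i) is an immediate corollary of (iii): taking $l=1$, $s=k-1$ and the index set $\{1,\ldots,k-1\}$ yields $\sigma_1(\lambda_k,\ldots,\lambda_n)=\sum_{j\ge k}\lambda_j\ge 0$, and if $\lambda_k<0$ then every summand is negative, a contradiction; so the Descartes argument, while valid, is independent machinery you could drop. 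For (iii), the polarization argument is sound. The only loose end you acknowledge is that $\partial_{i_1}\cdots\partial_{i_s}\sigma_{l+s}(\lambda)=\frac{(l+s)!}{l!}\,M(e_{i_1},\ldots,e_{i_s},\lambda,\ldots,\lambda)$ with $M$ the complete polarization, together with the identification $\Gamma_{l+s}(n)=\mathscr{C}(e,\sigma_{l+s},n)$; the latter is precisely what the paper proves in its appendix (Theorem \ref{1p}), and the paper's own proof of $\mathscr{C}(e,\sigma_k,n)\subset\widetilde{\Gamma}_k(n)$ there runs on the same G{\aa}rding circle of ideas, iterating the scalar inequality \eqref{eq:2.4} in coordinate directions where you invoke the polarization form once; the two are equivalent. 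Part (ii) is correct and is the standard reduction; your identity $\sigma_{k-1;j}(\lambda)-\sigma_{k-1;j+1}(\lambda)=(\lambda_{j+1}-\lambda_j)\,\sigma_{k-2;j,j+1}(\lambda)$ follows by expanding each of $\sigma_{k-1;j}$ and $\sigma_{k-1;j+1}$ in the one variable it retains but the other omits, then subtracting.
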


Using (ii) of the Proposition \ref{prop2.1}, for any $\lambda\in\overline{\Gamma}_{k}(n)$, the linearized operators
$\mathcal{L}_\psi$ defined in \eqref{2.2}  could be  degenerate elliptic. Now we study the non-strictly
$k$-convex Garding's cone $\overline\Gamma_{k}(n)$, we will show some special uniformly elliptic case.

\begin{theorem}\label{1th}
Suppose that $\lambda\in \partial\Gamma_{k}(n)=\mathbf P_{1}\cup\mathbf P_{2}$, $2\leq k\leq n-1$. Then either
\begin{itemize}
\item[(I)]If $\sigma_{k}(\lambda)= 0$ and $\sigma_{k+1}(\lambda)< 0$, then
$$
\sigma_{k-1;i}(\lambda)>0,\, i=1,2,\cdots,n.
$$
\end{itemize}
or
\begin{itemize}
\item[(II)]If $\sigma_{k}(\lambda)=0=\sigma_{k+1}(\lambda)$, then $$
    \sigma_{j}(\lambda)=0,\qquad j=k+2,\cdots, n,
    $$
    that means $\lambda\in \overline\Gamma_{n}(n)$.
\end{itemize}
\end{theorem}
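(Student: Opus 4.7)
My plan is to prove (II) first by a double induction on $n$ and $k$, and then derive (I) from (II) by contradiction. The key input for (II) is the Newton-type recursion $\sigma_\ell(\lambda) = \lambda_i\,\sigma_{\ell-1;i}(\lambda) + \sigma_{\ell;i}(\lambda)$; applying it at $\ell=k$ and $\ell=k+1$ and eliminating $\sigma_{k;i}(\lambda)$ gives
$$
\sigma_{k+1;i}(\lambda) \;=\; \sigma_{k+1}(\lambda)\,-\,\lambda_i\,\sigma_k(\lambda)\,+\,\lambda_i^{\,2}\,\sigma_{k-1;i}(\lambda).
$$
Under the hypotheses of (II), $\sigma_k(\lambda)=\sigma_{k+1}(\lambda)=0$, this reduces to $\sigma_{k+1;i}(\lambda)=\lambda_i^{\,2}\,\sigma_{k-1;i}(\lambda)$. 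Summing over $i$ and using the standard identity $\sum_{i=1}^n\sigma_{k+1;i}(\lambda)=(n-k-1)\sigma_{k+1}(\lambda)=0$ yields
$$
\sum_{i=1}^n \lambda_i^{\,2}\,\sigma_{k-1;i}(\lambda)=0.
$$
Since $\sigma_{k-1;i}(\lambda)\geq 0$ by Proposition \ref{prop2.1}(iii), every term of this sum vanishes, so for each index $i$, either $\lambda_i=0$ or $\sigma_{k-1;i}(\lambda)=0$.

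With this dichotomy in hand, (II) follows by strong induction on $n$ with an inner induction on $k$. The base cases are small $n$ together with the boundary range $k=n-1$ (where the conclusion is vacuous) and $k=1$ (where $\sigma_1=\sigma_2=0$ forces $\sum_i\lambda_i^{\,2}=\sigma_1^{\,2}-2\sigma_2=0$, so $\lambda\equiv 0$). In the inductive step, if some coordinate $\lambda_{i_0}=0$, I delete it and pass to $\lambda'\in\overline{\Gamma}_k(n-1)$, which still satisfies $\sigma_k(\lambda')=\sigma_{k+1}(\lambda')=0$; the outer induction hypothesis applied at dimension $n-1$ then gives $\sigma_j(\lambda)=\sigma_j(\lambda')=0$ for $k+2\leq j\leq n-1$, while $\sigma_n(\lambda)=0$ is immediate from the presence of the zero entry. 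If no coordinate $\lambda_i$ vanishes, then $\sigma_{k-1;i}(\lambda)=0$ for every $i$, and summing gives $\sigma_{k-1}(\lambda)=\tfrac{1}{n-k+1}\sum_i\sigma_{k-1;i}(\lambda)=0$; combined with $\sigma_k(\lambda)=0$, the inner induction hypothesis at level $k-1$ yields $\sigma_j(\lambda)=0$ for $j\geq k+1$, which is stronger than required.

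To deduce (I) from (II), I argue by contradiction. By Proposition \ref{prop2.1}(ii), once $\lambda$ is reordered to be decreasing, the map $i\mapsto\sigma_{k-1;i}(\lambda)$ is nondecreasing, so it suffices to prove $\sigma_{k-1;1}(\lambda)>0$. Suppose instead that $\sigma_{k-1;1}(\lambda)=0$; then $\sigma_k(\lambda)=\lambda_1\sigma_{k-1;1}(\lambda)+\sigma_{k;1}(\lambda)=0$ forces $\sigma_{k;1}(\lambda)=0$, so $\lambda':=(\lambda_2,\ldots,\lambda_n)\in\overline{\Gamma}_{k-1}(n-1)$ satisfies $\sigma_{k-1}(\lambda')=\sigma_k(\lambda')=0$. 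Applying (II), already established, at level $k-1$ in dimension $n-1$ yields $\sigma_{k+1}(\lambda')=0$, and hence $\sigma_{k+1}(\lambda)=\lambda_1\sigma_{k;1}(\lambda)+\sigma_{k+1;1}(\lambda)=0$, contradicting $\sigma_{k+1}(\lambda)<0$. The main technical hurdle will be bookkeeping the double induction in (II) correctly, in particular ensuring that the non-vanishing sub-case genuinely reduces to the inner hypothesis at level $k-1$ in the same dimension; the identity derivation and the contradiction step for (I) are short once the induction framework is in place.
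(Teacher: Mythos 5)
Your argument is correct, and it takes a route genuinely different from the paper's. The paper proves (I) first: an iterated peeling lemma drops the leading coordinate while preserving the conditions $\sigma_{k-1;1}=0$, $\sigma_k=0$, $\sigma_{k+1}<0$, until one lands in dimension $k+1$ where a separate end-case lemma supplies the contradiction; (II) is then deduced from (I) by ruling out $\sigma_{k+2}<0$, forcing $\sigma_{k+2}=0$ via Maclaurin, and iterating. You instead prove (II) first, from the identity
\[
\sigma_{k+1;i}(\lambda)=\sigma_{k+1}(\lambda)-\lambda_i\,\sigma_k(\lambda)+\lambda_i^{2}\,\sigma_{k-1;i}(\lambda),
\]
which under $\sigma_k=\sigma_{k+1}=0$, together with $\sum_i\sigma_{k+1;i}=(n-k-1)\sigma_{k+1}=0$ and $\sigma_{k-1;i}\ge 0$ on $\overline\Gamma_k(n)$, yields the coordinate-wise dichotomy $\lambda_i=0$ or $\sigma_{k-1;i}(\lambda)=0$. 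That dichotomy drives a clean double induction (delete a zero coordinate to lower $n$, or note $\sigma_{k-1}=0$ to lower $k$), and (I) then drops out by a short contradiction argument. Your route replaces two auxiliary lemmas and a Maclaurin passage by a single algebraic identity, at the cost of the double-induction bookkeeping, which is manageable: the reduction is lexicographically decreasing in $(n,k)$ with base cases $k=1$ (where $\sigma_1^2-2\sigma_2=\sum\lambda_i^2=0$ forces $\lambda=0$) and $k=n-1$ (vacuous). The one edge case worth flagging in the deduction of (I) is $k=n-1$: there the needed vanishing is $\sigma_{k+1;1}(\lambda)=\sigma_{n;1}(\lambda)=0$, which holds trivially rather than by invoking (II) in dimension $n-1$.
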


In order to prove this theorem, we need several lemmas.
\begin{lemma}\label{lm:3.4}
Let $\lambda=(\lambda_1,\cdots,\lambda_n)\in\mathbb{R}^n$ and it is in the descending order. For $0\le s< n-k-1$, denote  $\lambda^{(s)}=(\lambda_{s+1},\cdots,\lambda_n)$. Suppose that
$\lambda^{(s)}\in \overline\Gamma_{k}(n-s)$ and
$$
\left\{
\begin{array}{l}
\sigma_{k-1; s+1}(\lambda^{(s)})=0,\\
\sigma_{k}(\lambda^{(s)})=0,\\
\sigma_{k+1}(\lambda^{(s)})<0.
\end{array}\right.
$$
Then $\lambda^{(s+1)}=(\lambda_{s+2},\cdots,\lambda_n)\in \overline\Gamma_{k}(n-s-1)$ and
$$
\left\{
\begin{array}{l}
\sigma_{k-1; s+2}(\lambda^{(s+1)})=0,\\
\sigma_{k}(\lambda^{(s+1)})=0,\\
\sigma_{k+1}(\lambda^{(s+1)})<0.
\end{array}\right.
$$
\end{lemma}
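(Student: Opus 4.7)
The plan is to leverage the elementary identity $\sigma_j(\lambda^{(s)}) = \sigma_j(\lambda^{(s+1)}) + \lambda_{s+1}\sigma_{j-1}(\lambda^{(s+1)})$ (equivalently $\sigma_{j;s+1}(\lambda^{(s)}) = \sigma_j(\lambda^{(s+1)})$), coupled with Proposition~\ref{prop2.1}, to ``step down'' one coordinate at a time. The hypothesis $\sigma_{k-1;s+1}(\lambda^{(s)})=0$ rewrites as $\sigma_{k-1}(\lambda^{(s+1)})=0$, and this is the key quantity that unlocks the rest.

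First I would derive the statements about $\sigma_k$ and $\sigma_{k+1}$. Taking $j=k$ in the recursion and combining $\sigma_k(\lambda^{(s)})=0$ with $\sigma_{k-1}(\lambda^{(s+1)})=0$ gives $\sigma_k(\lambda^{(s+1)})=0$; substituting this into the $j=k+1$ case of the recursion yields $\sigma_{k+1}(\lambda^{(s+1)}) = \sigma_{k+1}(\lambda^{(s)}) < 0$. Next, to verify $\lambda^{(s+1)}\in\overline\Gamma_k(n-s-1)$: for $1\le j\le k-1$, Proposition~\ref{prop2.1}(iii) applied to $\lambda^{(s)}$ with one index removed gives $\sigma_j(\lambda^{(s+1)})=\sigma_{j;s+1}(\lambda^{(s)})\ge 0$; for $j=k$ the equality just proved suffices, and $\lambda^{(s+1)}$ inherits the descending order.

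The remaining conclusion $\sigma_{k-1;s+2}(\lambda^{(s+1)})=0$ is the main point. With $\lambda^{(s+1)}\in\overline\Gamma_k(n-s-1)$ now available, Proposition~\ref{prop2.1} applied to $\lambda^{(s+1)}$ itself gives $\lambda_{s+2}\ge 0$ (from (i), since $\lambda_{s+2}$ is the largest entry of $\lambda^{(s+1)}$), $\sigma_{k-2;s+2}(\lambda^{(s+1)})\ge 0$ (from (iii)), and $\sigma_{k-1;s+2}(\lambda^{(s+1)})\ge 0$ (from (ii)). Splitting $\sigma_{k-1}(\lambda^{(s+1)}) = \sigma_{k-1;s+2}(\lambda^{(s+1)}) + \lambda_{s+2}\sigma_{k-2;s+2}(\lambda^{(s+1)})$ and recalling that the left side vanishes forces both non-negative summands on the right to be $0$; in particular $\sigma_{k-1;s+2}(\lambda^{(s+1)})=0$.

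The only subtle point is a sequencing matter: the final step needs Proposition~\ref{prop2.1} applied to $\lambda^{(s+1)}$, so one must first have established its membership in $\overline\Gamma_k(n-s-1)$. Beyond this ordering, everything reduces to the basic recursion plus the three parts of Proposition~\ref{prop2.1}, so I do not anticipate any genuine obstacle.
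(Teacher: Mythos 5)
Your argument is correct and follows essentially the same route as the paper's proof: both derive $\sigma_{k-1}(\lambda^{(s+1)})=\sigma_k(\lambda^{(s+1)})=0$ and $\sigma_{k+1}(\lambda^{(s+1)})<0$ from the Newton-type recursion and \eqref{eq:3.11}, establish membership in $\overline\Gamma_k$ via Proposition~\ref{prop2.1}(iii), and then conclude $\sigma_{k-1;s+2}(\lambda^{(s+1)})=0$ by splitting $\sigma_{k-1}(\lambda^{(s+1)})$ into two nonnegative summands. The only cosmetic difference is that the paper first reduces to $s=0$ and invokes induction on the length of $\lambda^{(s)}$, while you carry general $s$ throughout.
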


\begin{proof} It suffices to prove this lemma for $s=0$ since we can complete the proof by an induction on the length of $\lambda^{(s)}$. Here we call the length of $\lambda^{(n-j)}$ is $j$ if $\lambda^{(n-j)}=(\lambda_{n-j+1},\ldots,\lambda_{n})$. Thus, we suppose that
\begin{equation}\label{eq:3.10}
\sigma_{k-1;1}(\lambda)=0,\quad \sigma_{k}(\lambda)=0,\quad \sigma_{k+1}(\lambda)<0.
\end{equation}
Using \eqref{eq:2.11}
$$
\sigma_{k}(\lambda)=\lambda_1\sigma_{k-1;1}(\lambda)+\sigma_{k;1}
(\lambda),
$$
and $$
\sigma_{k+1}(\lambda)=\lambda_1\sigma_{k;1}(\lambda)+\sigma_{k+1;1}
(\lambda),
$$
we get
\begin{equation}\label{eq:3.11}
\left\{
\begin{array}{l}
\sigma_{k-1;1}(\lambda_1,\cdots,\lambda_n)=\sigma_{k-1}(\lambda_2,
\cdots,\lambda_n)=0,\\
\sigma_{k;1}(\lambda_1,\cdots,\lambda_n)=\sigma_{k}(\lambda_2,
\cdots,\lambda_n)=0,\\
\sigma_{k+1;1}(\lambda_1,\cdots,\lambda_n)
=\sigma_{k+1}(\lambda_2,\cdots,\lambda_n)<0.
\end{array}\right.
\end{equation}
By \eqref{eq:2.18} and \eqref{eq:3.11},  we have, for
$\lambda\in \overline\Gamma_{k}(n)$ satisfying \eqref{eq:3.10},
$$
\sigma_{j;1}(\lambda_1,\cdots,\lambda_n)=\sigma_{j}
(\lambda_2,\cdots,\lambda_n)\geq 0, \quad\forall j\le k,
$$
which implies
\begin{equation}\label{eq:3.12}
\lambda^{(1)}:=(\lambda_{2},\lambda_{3},\cdots,\lambda_{n})\in \overline\Gamma_{k}(n-1).
\end{equation}
Using  Proposition \ref{prop2.1} for $\lambda^{(1)}\in
\overline\Gamma_{k}(n-1)$, we have
\begin{equation}\label{eq:3.13}
\lambda_2\geq\cdots\geq \lambda_{1+k}\geq 0,\quad \sigma_{k-1;2}(\lambda^{(1)})\geq0,\indent \sigma_{k-2;2}(\lambda^{(1)})\geq 0.
\end{equation}
Using the first equation in \eqref{eq:3.11} and the first and third inequalities in \eqref{eq:3.13}, we have
\begin{equation}\label{eq:3.14}
0=\sigma_{k-1;1}(\lambda)=\sigma_{k-1}(\lambda^{(1)})
=\lambda_{2}\sigma_{k-2;2}(\lambda^{(1)})+\sigma_{k-1;2}(\lambda^{(1)})\geq \sigma_{k-1;2}(\lambda^{(1)}).
\end{equation}
Then, by \eqref{eq:3.13} and \eqref{eq:3.14}
\begin{equation}\label{eq:3.15}
0\geq\sigma_{k-1;2}(\lambda^{(1)})\geq 0.
\end{equation}
Accordingly, from \eqref{eq:3.11}, \eqref{eq:3.12} and
\eqref{eq:3.15}, we get $\lambda^{(1)}\in \overline\Gamma_{k}(n-1)$
and
$$
\left\{
\begin{array}{l}
\sigma_{k-1; 2}(\lambda^{(1)})=0,\\
\sigma_{k}(\lambda^{(1)})=0,\\
\sigma_{k+1}(\lambda^{(1)})<0.
\end{array}\right.
$$
This completes the proof of Lemma \ref{lm:3.4} for $s=0.$ Then, by an induction on
 the length of $\lambda^{(s)}$, we finish the proof of Lemma \ref{lm:3.4}.
\end{proof}

\smallskip
By Proposition \ref{prop2.1}, if $\lambda\in \overline\Gamma_{m-1}(m), m>1$
and $\sigma_{m-1}(\lambda)=0,$ then
$$\sigma_{m-2,i}(\lambda)\geq0,\qquad i=1,2,\cdots,m.$$
Under additional condition $\sigma_{m}(\lambda)<0,$ we have

\begin{lemma}\label{lm:3.5}
Let $\lambda\in \overline\Gamma_{m-1}(m), m>2$  and
$\sigma_{m-1}(\lambda)=0.$ If $\sigma_{m}(\lambda)<0$, then we have
$$
\sigma_{m-2,i}(\lambda)>0,\qquad i=1,2,\cdots,m.
$$
\end{lemma}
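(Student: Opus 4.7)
The plan is to argue by contradiction, using the hypothesis $\sigma_m(\lambda)<0$ to rule out any of the $\lambda_i$ being zero.

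First I would record the weak inequality: applying Proposition \ref{prop2.1}(iii) with $n=m$, $k=m-1$, $l=m-2$, $s=1$ (so $l+s=m-1\le k$), we already get $\sigma_{m-2,i}(\lambda)\ge 0$ for every $i$. It therefore remains to exclude equality. Note also that, since $\sigma_m(\lambda)=\lambda_1\cdots\lambda_m<0$, none of the components of $\lambda$ can vanish; this single observation is what drives the argument.

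Next I would invoke the standard splitting identity (obtained by separating, in the definition of $\sigma_{m-1}$, the monomials that contain $\lambda_{i_0}$ from those that do not):
\begin{equation*}
\sigma_{m-1}(\lambda)=\lambda_{i_0}\,\sigma_{m-2;i_0}(\lambda)+\sigma_{m-1;i_0}(\lambda).
\end{equation*}
Now suppose for contradiction that $\sigma_{m-2,i_0}(\lambda)=0$ for some index $i_0$. Combined with the assumption $\sigma_{m-1}(\lambda)=0$, the identity forces $\sigma_{m-1;i_0}(\lambda)=0$. But $\sigma_{m-1;i_0}(\lambda)$ is the top elementary symmetric polynomial in the $m-1$ variables $(\lambda_j)_{j\neq i_0}$, so it equals $\prod_{j\neq i_0}\lambda_j$. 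Hence some $\lambda_{j_0}$ with $j_0\neq i_0$ vanishes, which gives $\sigma_m(\lambda)=\prod_{j=1}^m\lambda_j=0$, contradicting $\sigma_m(\lambda)<0$.

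This completes the proof. There is no real obstacle here: the delicate part is only to realize that the strict sign of $\sigma_m$ is precisely the extra ingredient that upgrades the non-strict bound from Proposition \ref{prop2.1}(iii) to a strict one, via the same Newton-type recursion used in Lemma \ref{lm:3.4}. The lemma will then serve in Theorem \ref{1th}(I) as a tool to propagate the non-degeneracy $\sigma_{k-1;i}(\lambda)>0$ along the induction on the length of $\lambda^{(s)}$ started in Lemma \ref{lm:3.4}.
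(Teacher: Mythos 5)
Your argument is correct and is essentially the same as the paper's: both start from the weak inequality $\sigma_{m-2;i}(\lambda)\ge 0$, use the splitting identity $\sigma_{m-1}(\lambda)=\lambda_{i_0}\sigma_{m-2;i_0}(\lambda)+\sigma_{m-1;i_0}(\lambda)$ to deduce $\sigma_{m-1;i_0}(\lambda)=\prod_{j\ne i_0}\lambda_j=0$, and conclude $\sigma_m(\lambda)=0$, contradicting $\sigma_m(\lambda)<0$. The only cosmetic difference is that the paper states and proves the contrapositive (``$\sigma_{m-2;i}(\lambda)=0$ implies $\sigma_m(\lambda)=0$''), whereas you run it directly as a proof by contradiction.
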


\begin{proof}
By Proposition \ref{prop2.1}, we have $\sigma_{m-2,i}(\lambda)\geq 0$;  the Maclaurin's inequalities \eqref{eq:2.7} yields $\sigma_{m}(\lambda)\leq 0$.
Thus, we can equivalently say, if  the inequality above does not hold, that is,
$\sigma_{m-2, i}(\lambda)=0$ for some $i\in\{1, \cdots, m\}$, then
$$
\sigma_{m}(\lambda)=\lambda_1\cdots\lambda_m=0.
$$It is enough to prove that  $\sigma_{m-1}(\lambda)=0=\sigma_{m-2;1}(\lambda)=0$
imply $\sigma_{m}(\lambda)=0$, since the other case can be deduced by absurd argument of this results.

Substituting $\sigma_{m-1}(\lambda)=0=\sigma_{m-2;1}(\lambda)=0$ into

$$
\sigma_{m-1}(\lambda)=\lambda_1\sigma_{m-2;1}(\lambda)
+\sigma_{m-1;1}(\lambda),
$$
we have
$$
0=\sigma_{m-1;1}(\lambda_1,\cdots,\lambda_m )=\sigma_{m-1}(\lambda_2,\cdots,\lambda_m)=\prod_{i=2}^{m} \lambda_{i}.
$$
Thus,
$$
0=\lambda_{1}\prod_{i=2}^{m}\lambda_{i}=\sigma_{m}(\lambda).
$$
\end{proof}

\begin{proof}[{\bf Proof of Theorem \ref{1th}}]  By these two lemmas above, we prove Theorem \ref{1th} by an induction on $k$.
Let $\lambda=(\lambda_{1},\lambda_{2},\cdots,\lambda_{n})\in \mathbb{R}^{n}$ and $\lambda$ is in the descending order.

\noindent
\item{\bf Step 1: The case $k=2$.}  We claim the following results :

{\em Let $\lambda\in \overline\Gamma_{2}(n)$ and $\sigma_{2}(\lambda)=0$, if $\sigma_{3}(\lambda)<0$, then we have
$$
\sigma_{1,i}(\lambda)>0,\qquad i=1,2,\cdots,n.
$$
Equivalently, for $\lambda\in \overline\Gamma_{2}(n)$ with $\sigma_{2}(\lambda)=0$, if $\sigma_{1, i}(\lambda)=0$ for some $i\in\{1, \cdots, n\}$, then
$$
\sigma_{j}(\lambda)=0,\qquad j=3, \cdots, n.
$$
}

 By assumption above, we have
\begin{align*}
\sigma_{2}(\lambda)&=\lambda_{1}\sigma_{1;1}(\lambda)
+\sigma_{2;1}(\lambda),\\
\sigma_{1,1}(\lambda)&=\sigma_{1}(\lambda)-\lambda_1=\sum^n_{j=2}
\lambda_j=\sigma_{1}(\lambda_2, \cdots, \lambda_n),\\
\sigma_{2;1}(\lambda)&=\sigma_{2}(\lambda_2, \cdots, \lambda_n)
=\frac{(\sigma_{1}-\lambda_{1})^{2}-\sum_{i=2}^{m}\lambda^{2}_{i}}{2}.
\end{align*}
If we assume $\sigma_{1,1}(\lambda)=0$, then $\sigma_{1,1}(\lambda)=0$ together with $\sigma_{2}(\lambda)=0$ yields
$$
0=\sigma_{2}(\lambda)=\lambda_1\sigma_{1,1}(\lambda)+\sigma_{2,1}(\lambda)
=\sigma_{2,1}(\lambda)=\frac{(\sigma_{1}-\lambda_{1})^{2}-\sum_{i=2}^{m}\lambda^{2}_{i}}{2}
=\frac{-\sum_{i=2}^{m}\lambda^{2}_{i}}{2},$$
which implies
$$
\sum_{i=2}^{m}\lambda_{i}^{2}=0,
$$
and thus
$$
\lambda_{i}=0,\qquad i=2, \cdots, n.
$$
Then
$$
\sigma_{j}(\lambda)=0, \quad j=2, \cdots, n\, ,
$$
which contradicts with $\sigma_{3}(\lambda)<0$,  therefore $\sigma_{1,1}(\lambda)=0$ is impossible.

\item{\bf Step 2: The case $2< k\leq n-1.$ }
If $k=n-1$, it is included  in Lemma \ref{lm:3.5}. Now we consider  the general case $2< k< n-1$.

{\bf The proof of part (I):} We will prove that, for $2<k<n-1$,
if $\lambda\in \overline\Gamma_{k}(n)$, $\sigma_{k}(\lambda)=0$ and  $\sigma_{k+1}(\lambda)<0$, then
$$
\sigma_{k-1;1}(\lambda)>0 .
$$
We prove this claim by absurd argument. Recall $\lambda^{(s)}=(\lambda_{s+1},\ldots,\lambda_n)$ and suppose that
$\lambda=\lambda^{(0)}\in \overline\Gamma_{k}(n)$,
\begin{equation*}
\left\{
\begin{array}{l}
\sigma_{k-1; 1}(\lambda^{(0)})=0,\,\mbox{(the absurd assumption),}\\
\sigma_{k}(\lambda^{(0)})=0,\\
\sigma_{k+1}(\lambda^{(0)})<0.
\end{array}\right.
\end{equation*}
By using Lemma \ref{lm:3.4} and the induction assumption up to
$s=n-k-1$, we have that $\lambda^{(n-k-1)}\in
\overline\Gamma_{k}(k+1)$ and

\begin{equation*}
\left\{
\begin{array}{l}
\sigma_{k-1;n-k}(\lambda^{(n-k-1)})=\sigma_{k-1;n-k}(\lambda_{n-k},\cdots,
\lambda_{n})=0,\\
\sigma_{k}(\lambda^{(n-k-1)})=\sigma_{k}(\lambda_{n-k},\cdots,
\lambda_{n})=0,\\
\sigma_{k+1}(\lambda^{n-k-1})=\sigma_{k+1}
(\lambda_{n-k},\cdots,\lambda_{n})<0.
\end{array}\right.
\end{equation*}

This  contradicts with the conclusion of Lemma \ref{lm:3.5} with $m=k+1$. Thus,
the assumption $\sigma_{k-1;1}(\lambda)=0$ is really absurd, and therefore $\sigma_{k-1;1}(\lambda)>0$.

\smallskip

{\bf The proof of part (II):} We suppose that $\lambda\in \overline{\Gamma}_{k}(n)$ and
 $\sigma_{k}(\lambda)=\sigma_{k+1}(\lambda)=0$. Then $\lambda\in \overline{\Gamma}_{k+1}(n)$,
 and for any $\varepsilon>0$, from the formula
$$
\sigma_{k+1}(\lambda+\varepsilon)=
\sum_{j=0}^{k+1}C(j,k,n)\varepsilon^{j}\sigma_{k+1-j}(\lambda),\indent
\ C(j,k,n)=\frac{(^{n}_{k+1})(^{k+1}_{j})}{(^{n}_{k+1-j})}
$$
with the convention $\sigma_{0}(\lambda)=1$,  we have
$$
\lambda+\varepsilon=(\lambda_{1}+\varepsilon,\lambda_{2}
+\varepsilon,\cdots,\lambda_{n}+\varepsilon)\in \Gamma_{k+1}(n).
$$
We see that either $\sigma_{k+2}(\lambda)\geq 0$ or $\sigma_{k+2}(\lambda)< 0.$ Now we prove  $\sigma_{k+2}(\lambda)= 0.$ Firstly we claim that  $\sigma_{k+2}(\lambda)< 0$ is impossible. Otherwise, from the assumption $\sigma_{k}(\lambda)=\sigma_{k+1}(\lambda)=0$, we have
\begin{equation*}
\begin{split}
&\sigma_{k}(\lambda)=0,\\
&\sigma_{k+1}(\lambda)=0,\\
&\sigma_{k+2}(\lambda)<0.
\end{split}
\end{equation*}
Using the results of part (I), we obtain from both $\sigma_{k+1}(\lambda)=0$ and
$\sigma_{k+2}(\lambda)<0$  that
\begin{equation}\label{eq:3.17}
\sigma_{k; 1}(\lambda)>0.
\end{equation}
Since $\lambda\in \overline{\Gamma}_{k}(n),$ it is necessary by Proposition \ref{prop2.1} that
$\lambda_1\ge 0$ and $\sigma_{k-1;1}(\lambda)\ge 0$. So we have
$$
0=\sigma_{k}(\lambda)=\lambda_{1}
\sigma_{k-1;1}(\lambda)+\sigma_{k;1}(\lambda)\geq \sigma_{k;1}(\lambda).
$$
There is a contradiction of \eqref{eq:3.17}, thus, the case
that $\sigma_{k+2}(\lambda)< 0$ does not occur and we obtain  $\sigma_{k+2}(\lambda)\geq 0$ in which case $\lambda\in \overline\Gamma_{k+2}(n)$. Applying the Maclaurin's inequalities  to $\lambda+\varepsilon\in \Gamma_{k+2}(n)$, we obtain
$$
\left[\frac{1}{(^{n}_{k+1})}\sigma_{k+1}(\lambda+\varepsilon)
\right]^{\frac{1}{k+1}}\geq
\left[\frac{1}{(^{n}_{k+2})}\sigma_{k+2}(\lambda+\varepsilon)
\right]^{\frac{1}{k+2}}.
$$
Let $\varepsilon\rightarrow 0^{+}$, then
$$
\sigma_{k+1}(\lambda)=0\qquad
\mbox{implies}\qquad
\sigma_{k+2}(\lambda)=0.
$$

Repeating  the above argument for $k+1$, $k+2$,$\cdots$,
$n$, we obtain
$$
\sigma_{k+j}(\lambda)=0,\indent j=1,2,\cdots,n-k,
$$
that is, $\lambda\in \overline\Gamma_{n}(n)$ and this completes the proof.
\end{proof}

Now we are back to the definitions of $\mathbf{P}_{1}$ and $\mathbf{P}_{2}$,
by Theorem \ref{1th}, can be stated more precisely as, for $k<n$,
$$
\mathbf{P}_{1}=\{\lambda\in \Gamma_{k}:\sigma_{j}(\lambda)\geq 0,\,\sigma_{k}(\lambda)=\ldots=\sigma_{n}(\lambda)=0,\,1\leq j\leq k-1\},
$$
$$
\mathbf{P}_{2}=\{\lambda\in \Gamma_{k}:\sigma_{j}(\lambda)> 0,\,\sigma_{k}(\lambda)=0,\,\sigma_{k+1}<0,\,1\leq j\leq k-1\}.
$$
If $2\leq k<n$ , then $\mathbf{P}_{2}\neq\emptyset$.  Here is an example , let
\begin{equation*}
  \left\{
\begin{array}{l}
\lambda_i=1, 1\leq i\leq k-1 \\
\lambda_k=M,\lambda_{k+1}=-\frac{1}{M}\\
\lambda_i=0, k+1< i\leq n
\end{array}\right.
\end{equation*}
with $M=\frac{k-1+\sqrt{(k-1)^2+4}}{2}>1$, then $M-\frac{1}{M}=k-1,$ $\sigma_j(\lambda)>0 (1\leq j\leq k-1)$, $\sigma_k(\lambda)=0$
and $\sigma_{k+1}(\lambda)=-1$, which means that $\mathbf{P}_{2}\neq\emptyset.$

The significance of Theorem\ref{1th} is the breakthrough of the classical framework of the ellipticity of Hessian equations. It is well known that, $S_k[u]=f$ is elliptic if $f>0$  and degenerate elliptic if $f\geq 0.$ By the definition of $\mathbf{P}_2$, the condition $f(0)=0$ must lead to degenerate ellipticity for Monge-Amp\'{e}re equation. However, it is no longer true for $k-$Hessian ($2\leq k<n$) equation by Theorem \ref{1th}.  Next theorem gives a complete $k$-Hessian classification of second-order polynomials in the degenerate elliptic case.

\begin{theorem}\label{2th}
Suppose that $\lambda\in \partial\Gamma_{k}(n)$, $2\leq k\leq n-1$.
\begin{itemize}
\item[(1)] For any $\lambda\in \mathbf{P}_{2}$, we have that $ \psi=\frac{1}{2}\sum_{i=1}^n\lambda_iy_i^2 $
is a solution of $k-$Hessian equation $S_k[\psi]=0,$ and the linearized operators
of  $\mathcal{L}_\psi=\sum_{i=1}^n\sigma_{k-1,i}(\lambda)\partial_i^2 $ is uniformly elliptic.

\item[(2)]For any  $\lambda\in \mathbf{P}_{1}$ with $\lambda_1\geq\ldots \geq \lambda_n$, then $
\psi=\frac{1}{2}\sum_{i=1}^n\lambda_iy_i^2$
is a non-strict convex solution of $k-$Hessian equation $S_k[\psi]=0$ ,
and the linearized operators of $\mathcal{L}_\psi$ is degenerate elliptic with
$$
\sigma_{k-1,i}=0,\indent 1\leq i\leq k-1.
$$
Moreover, if $\sigma_{k-1}(\lambda)=0$, then $\mathcal{L}_\psi = 0$, that is, $\sigma_{k-1,i}=0$ for $1\leq i\leq n$; if
$\sigma_{k-1}(\lambda)>0,$ then
\begin{equation}\label{eq:3.5}
  \left\{
\begin{array}{l}
\lambda_i>0, 1\leq i\leq k-1 \\
  \lambda_i=0, k\leq i\leq n\\
  \sigma_{k-1,i}=0,1\leq i\leq k-1\\
  \sigma_{k-1,i}=\prod_{j=1}^{k-1}\lambda_j>0,k\leq i\leq n
\end{array}\right.
\end{equation}

\end{itemize}
\end{theorem}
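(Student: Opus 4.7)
The plan is to deduce both parts of Theorem \ref{2th} essentially as corollaries of Theorem \ref{1th}, combined with a short combinatorial analysis of the elementary symmetric polynomials restricted to nonnegative vectors.

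For part (1), the computation $S_k[\psi] = \sigma_k(\lambda) = 0$ is immediate from the definition of $\mathbf{P}_2$ together with \eqref{2.1}. The linearized operator $\mathcal{L}_\psi$ has constant coefficients $\sigma_{k-1,i}(\lambda)$, so uniform ellipticity reduces to showing each $\sigma_{k-1,i}(\lambda) > 0$; this is exactly the content of Theorem \ref{1th}(I), which applies because $\sigma_k(\lambda) = 0$ and $\sigma_{k+1}(\lambda) < 0$ on $\mathbf{P}_2$.

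For part (2), I would first invoke Theorem \ref{1th}(II) to conclude $\lambda \in \overline{\Gamma}_n(n)$. Applying Proposition \ref{prop2.1}(i) with $k$ replaced by $n$ then forces every $\lambda_i \ge 0$, so $\psi$ is convex; since $\sigma_n(\lambda) = \lambda_1\cdots\lambda_n = 0$, at least one $\lambda_i$ vanishes, giving the claimed non-strict convexity. Let $q$ denote the number of strictly positive $\lambda_i$'s, so by the descending order
\[
\lambda_1 \ge \cdots \ge \lambda_q > 0 = \lambda_{q+1} = \cdots = \lambda_n.
\]
The constraint $\sigma_k(\lambda) = 0$ rules out $q \ge k$, for otherwise $\lambda_1\cdots\lambda_k$ would be a strictly positive summand; hence $q \le k-1$.

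The rest is bookkeeping. By definition, $\sigma_{k-1,i}(\lambda)$ is supported on those $(k-1)$-subsets of $\{1,\ldots,n\}\setminus\{i\}$ whose associated $\lambda_j$'s are all positive, so it vanishes unless at least $k-1$ positive entries remain after removing position $i$. For $1 \le i \le k-1$, either $i \le q$ (leaving $q-1 \le k-2$ positives) or $i > q$, which forces $q \le k-2$ (leaving $q \le k-2$ positives); in both subcases $\sigma_{k-1,i}(\lambda) = 0$, which gives the third line of \eqref{eq:3.5} and also shows $\mathcal{L}_\psi \equiv 0$ when $\sigma_{k-1}(\lambda) = 0$ (equivalently $q \le k-2$, because $\sigma_{k-1}(\lambda) \neq 0$ already requires $q \ge k-1$). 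For $i \ge k$ and $q = k-1$, removing the zero entry $\lambda_i$ leaves exactly $\lambda_1,\ldots,\lambda_{k-1}$ positive, so the unique nonvanishing monomial is $\lambda_1\cdots\lambda_{k-1}$ and $\sigma_{k-1,i}(\lambda) = \prod_{j=1}^{k-1}\lambda_j = \sigma_{k-1}(\lambda) > 0$, matching the last line of \eqref{eq:3.5}. There is no serious obstacle: all of the genuine ellipticity content is packaged in Theorem \ref{1th}, and the remaining work is a tidy case analysis on how $q$ compares with $k-1$ and on whether the removed index sits among the positive or the zero block.
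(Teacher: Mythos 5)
Your proposal is correct. Part (1) follows the paper verbatim: Theorem \ref{1th}(I) gives $\sigma_{k-1;i}(\lambda)>0$ for all $i$, which immediately yields uniform ellipticity of the constant-coefficient operator $\mathcal{L}_\psi$.

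For part (2) the overall skeleton is the same as the paper's (invoke Theorem \ref{1th}(II) to land in $\overline\Gamma_n(n)$, deduce that all $\lambda_i\ge 0$, pin down the zero structure, then read off $\sigma_{k-1;i}$), but your route to the key intermediate fact $\lambda_k=\cdots=\lambda_n=0$ is genuinely different and shorter. The paper proves this by an induction on the dimension: it applies Theorem \ref{1th}(II) repeatedly to the shrinking truncations $(\lambda_1,\ldots,\lambda_{n-1})$, $(\lambda_1,\ldots,\lambda_{n-2})$, \dots, using $\sigma_{j}$ identities to peel off one trailing zero at a time down to index $k+1$, and then handles $\lambda_k=0$ with a separate factorization of $\sigma_k$. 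You instead observe directly that once all $\lambda_i\ge 0$ and $\sigma_k(\lambda)=0$, the number $q$ of positive entries must satisfy $q\le k-1$ (otherwise the monomial $\lambda_1\cdots\lambda_k$ makes $\sigma_k>0$), which in descending order is exactly $\lambda_k=\cdots=\lambda_n=0$. The remaining case analysis on whether the removed index $i$ lies in the positive block $\{1,\ldots,q\}$ or the zero block, split on $q\le k-2$ versus $q=k-1$, is a clean way to recover all four lines of \eqref{eq:3.5} and the statements about $\mathcal{L}_\psi$, and it is equivalent to the paper's computation via $\sum_{i}\sigma_{k-1;i}(\lambda)=(n-k+1)\sigma_{k-1}(\lambda)$. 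So your middle step replaces the paper's repeated invocation of Theorem \ref{1th}(II) with a one-line support-counting argument; this is simpler and buys nothing extra in generality, but is a legitimate tightening of the proof.
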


\begin{proof}
(1) is a direct consequence of (I) in Theorem \ref{1th}. Now we prove (2). If $\lambda\in \mathbf{P}_{1}$, we have $\sigma_{k-1,i}\geq 0$ for $1\leq i\leq n$ by \eqref{eq:2.17},
and $\sigma_j(\lambda)=0$ for $k\leq j\leq n$ by Theorem \ref{1th} (II).
From $\sigma_n(\lambda)=\prod_{i=1}^n\lambda_i=0$, $\lambda_j\geq 0 (1\leq j \leq n)$ and
$\lambda$ is in decreasing order, it follows that $\lambda_n=0$ and
$$
\sigma_k(\lambda_1,\ldots,\lambda_{n-1})=
\sigma_k(\lambda)-\lambda_n\sigma_{k-1,n}(\lambda)=\sigma_k(\lambda)=0.
$$
Similarly,
$$
\sigma_{j}(\lambda_1,\ldots,\lambda_{n-1})=\sigma_j(\lambda)\geq0,
\sigma_{k+1}(\lambda_1,\ldots,\lambda_{n-1})=\sigma_{k+1}(\lambda)=0,1\leq j\leq k-1.
$$
Applying Theorem  \ref{1th} (II) to $(\lambda_1,\ldots,\lambda_{n-1})\in \Gamma_{k}(n-1),$
we obtain
$$
\sigma_{n-1}(\lambda_{1},\ldots,\lambda_{n-1})=0
$$
and by the same reasoning in the $n-$dimensional case, $\lambda_{n-1}=0.$ By an induction on the dimension up to $k+1,$ we see that
$\lambda_i=0, k+1\leq i\leq n.$ Since $\sigma_k(\lambda)=0$ and
$$
\sigma_k(\lambda)=\sigma_k(\lambda_1,\ldots,\lambda_k,0,\ldots,0)=\prod_{i=1}^k\lambda_i,
$$
 we have
$\lambda_k=0$ by recalling that $\lambda$ is in descending order. By the virtue of
$$
\sum_{i=1}^n\sigma_{k-1;i}(\lambda)=(n-k+1)\sigma_{k-1}(\lambda),\sigma_{k-1;i}(\lambda)\geq 0,\indent 1\leq i\leq n,
$$
we see that, if $\sigma_{k-1}(\lambda)=0$, then $\sigma_{k-1,i}= 0$ for $1\leq i\leq n$;
if $\sigma_{k-1}(\lambda)>0,$ from
$$
0<\sigma_{k-1}(\lambda)=\sigma_{k-1}(\lambda_1,\ldots,\lambda_{k-1},0,\ldots,0)
=\prod_{i=1}^{k-1}\lambda_i,
$$
we have
$$
\lambda_i>0,1\leq i\leq k-1.
$$
By the definition of $\sigma_{k-1;i}(\lambda)$ and
$$
\lambda=(\lambda_1,\ldots,\lambda_{k-1},0,\ldots,0),
$$
 we obtain the last two conclusions of \eqref{eq:3.5}.
\end{proof}

If $\lambda\in \Gamma_k(n)$, we certainly have $\lambda\in \Gamma_{k-1}(n).$
If $\lambda\in \mathbf{P}_{2}\subset\partial \Gamma_k(n)$ with $\sigma_{k+1}(\lambda)<0$,
from $\sum_{i=1}^n\sigma_{k-1;i}(\mu)=(n-k+1)\sigma_{k-1}(\mu),\mu\in \mathbb{R}^n$ and Theorem  \ref{1th}  (I), it follows that $\sigma_{k-1}(\lambda)>0$ and $\lambda\in \Gamma_{k-1}(n).$
 In those two cases above, $0<\sigma_{k-1;1}\leq \sigma_{k-1;2}\leq\ldots\leq \sigma_{k-1;n}$ implies the uniform ellipticity. Conversely, we want to know whether and how the ellipticity  is true for $\lambda\in \Gamma_{k-1}(n).$ Also notice that, in the monotonicity formula \eqref{eq:2.17}, it is required that $\lambda \in \overline{\Gamma_k(n)}$ rather than $\lambda \in \overline{\Gamma_{k-1}(n)}$ as Lemma \ref{lemma-3th} below.

\begin{lemma}\label{lemma-3th}.
Suppose that $\lambda\in \Gamma_{k-1}(n)$, $2\leq k\leq n-1$. If $\lambda_1\geq \lambda_2\geq\ldots\geq \lambda_n$, then
$$
\sigma_{k-1;1}(\lambda)\geq 0
$$
is equivalent to
$$
0\leq \sigma_{k-1;1}(\lambda)\leq \sigma_{k-1;2}(\lambda)\leq\ldots\leq \sigma_{k-1;n}(\lambda).
$$
\end{lemma}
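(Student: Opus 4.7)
The implication from the full chain of inequalities to $\sigma_{k-1;1}(\lambda)\ge 0$ is immediate, so the substance of the lemma lies in the reverse direction. My plan is to exploit the elementary identity
\[
\sigma_{k-1;j}(\lambda)-\sigma_{k-1;i}(\lambda)=(\lambda_{i}-\lambda_{j})\,\sigma_{k-2;i,j}(\lambda),\qquad i\neq j,
\]
obtained by splitting $\sigma_{k-1;i}(\lambda)$ according to whether $\lambda_{j}$ is present. Since $\lambda_{i}-\lambda_{j}\ge 0$ for $i<j$, the full monotonicity reduces to proving $\sigma_{k-2;i,j}(\lambda)\ge 0$ for every pair $i<j$.

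To produce these nonnegativities I would first pass to the shortened tuple $\mu=(\lambda_{2},\dots,\lambda_{n})$ and verify $\mu\in\overline{\Gamma}_{k-1}(n-1)$. For $j\le k-2$, $\sigma_{j;1}(\lambda)\ge 0$ follows directly from Proposition~\ref{prop2.1}(iii) applied to $\lambda\in\Gamma_{k-1}(n)\subset\overline{\Gamma}_{k-1}(n)$ (the admissible range there is $l+s\le k-1$, which covers $j+1\le k-1$), and for $j=k-1$ the relation $\sigma_{k-1;1}(\lambda)=\sigma_{k-1}(\mu)\ge 0$ is exactly the hypothesis. With $\mu\in\overline{\Gamma}_{k-1}(n-1)$ in hand, Proposition~\ref{prop2.1}(iii) applied to $\mu$ with $l=k-2$ and $s=1$ yields $\sigma_{k-2;1,m}(\lambda)\ge 0$ for every $m\ge 2$. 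Feeding this into the identity above gives $\sigma_{k-1;m}(\lambda)\ge\sigma_{k-1;1}(\lambda)\ge 0$ for all $m$.

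The final step is to iterate. Having now $\sigma_{k-1;i}(\lambda)\ge 0$ for every $i$, the same reasoning (removing $\lambda_{i}$ in place of $\lambda_{1}$) shows that the tuple $\mu_{i}=(\lambda_{1},\dots,\lambda_{i-1},\lambda_{i+1},\dots,\lambda_{n})$ lies in $\overline{\Gamma}_{k-1}(n-1)$, and Proposition~\ref{prop2.1}(iii) applied to $\mu_{i}$ then gives $\sigma_{k-2;i,m}(\lambda)\ge 0$ for every $m\neq i$, which is the nonnegativity identified at the start. The only real subtlety is the following: Proposition~\ref{prop2.1} as stated requires membership in $\overline{\Gamma}_{k}(n)$, whereas our hypothesis is only $\lambda\in\Gamma_{k-1}(n)$; the assumption $\sigma_{k-1;1}(\lambda)\ge 0$ is precisely the ingredient needed to promote each shortened tuple $\mu_{i}$ into $\overline{\Gamma}_{k-1}(n-1)$, so that the Proposition can be applied with the parameter $k$ lowered by one.
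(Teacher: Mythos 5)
Your proof is correct and follows essentially the same route as the paper's: the paper establishes $\sigma_{k-1;2}(\lambda)\ge\sigma_{k-1;1}(\lambda)$ via the same difference identity (written there with $\varepsilon=\lambda_1-\lambda_2$) after promoting $(\lambda_2,\dots,\lambda_n)$ to $\overline{\Gamma}_{k-1}(n-1)$, and then says ``by using it again'' to get the full chain. You have simply made that iteration explicit — first propagating nonnegativity to all $\sigma_{k-1;i}(\lambda)$ so that every shortened tuple $\mu_i$ lands in $\overline{\Gamma}_{k-1}(n-1)$, then extracting $\sigma_{k-2;i,j}(\lambda)\ge 0$ for every pair — which is the correct way to fill in the terse phrase in the paper.
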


\begin{proof}
We claim that $\sigma_{k-1;2}(\lambda)\geq \sigma_{k-1;1}(\lambda)$, that is
$$\sigma_{k-1}(\lambda_1,\lambda_3,\ldots,\lambda_n)\geq
\sigma_{k-1}(\lambda_2,\lambda_3,\ldots,\lambda_n).$$
If the claim is true, by using it again, we obtain
$$\sigma_{k-1;n}(\lambda)\geq \ldots\sigma_{k-1;2}(\lambda)\geq \sigma_{k-1;1}(\lambda).$$
Since $\lambda \in \Gamma_{k-1}(n),$ by \eqref{eq:2.12} we have
$$\sigma_{l;1}(\lambda)>0,\indent l+1\leq k-1,$$
which, together with the assumption
$$\sigma_{k-1}(\lambda_2,\lambda_3,\ldots,\lambda_n)=\sigma_{k-1,1}(\lambda)\geq 0,$$
yields $(\lambda_2,\lambda_3,\ldots,\lambda_n)\in {\Gamma_{k-1}(n-1)}.$ Using
\eqref{eq:2.12} again, we obtain
$$\sigma_{k-2}(\lambda_3,\lambda_4,\ldots,\lambda_n)=
\sigma_{k-2,2}(\lambda_2,\lambda_3,\ldots,\lambda_n)\geq 0.$$
 Let $\varepsilon=\lambda_1-\lambda_2\geq 0$, we have
\begin{equation*}
\begin{split}
&\sigma_{k-1;2}(\lambda)=\sigma_{k-1}(\lambda_1,\lambda_3,\ldots,\lambda_n)\\
=&(\lambda_2+\varepsilon)\sigma_{k-2}(\lambda_3,\ldots,\lambda_n)
+\sigma_{k-1}(\lambda_3,\ldots,\lambda_n)
\\
\geq & \lambda_2\sigma_{k-2}(\lambda_3,\ldots,\lambda_n)
+\sigma_{k-1}(\lambda_3,\ldots,\lambda_n)\\
=&\sigma_{k-1}(\lambda_2,\lambda_3,\ldots,\lambda_n)=\sigma_{k-1;1}(\lambda),
\end{split}
\end{equation*}
thus, the claim is proved.
\end{proof}

We will give a characterization of ellipticity for the linearized operator of $S_k[\psi]=c$ with $c\in \mathbb{R}.$

\begin{theorem}\label{3th}
For any $c\in \mathbb{R}$, there exists $\lambda\in \Gamma_{k-1}(n)$ such that
\begin{equation}\label{eq:3.6}
0< \sigma_{k-1;1}(\lambda)\leq \sigma_{k-1;2}(\lambda)\leq\ldots\leq \sigma_{k-1;n}(\lambda),
\end{equation}
and $\psi=\frac{1}{2}\sum_{i=1}^n\lambda_iy_i^2$ is a $(k-1)$ - convex  solution of  $k-$Hessian
equation $S_k[\psi]=c$, moreover, the linearized operators of $S_k[u]$ at $\psi$
\begin{equation}\label{eq:3.7}
\mathcal{L}_\psi=\sum_{i=1}^n\sigma_{k-1,i}(\lambda)\partial_i^2
\end{equation}
is uniformly elliptic.
\end{theorem}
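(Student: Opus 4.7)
My strategy is to construct $\lambda$ explicitly within the two--parameter family
\[
\lambda=(\mu,\mu,\dots,\mu,-\nu),\qquad \mu>0,\ \nu\ge 0,
\]
with $n-1$ copies of $\mu$. A direct binomial expansion gives, for $1\le j\le k$,
\[
\sigma_j(\lambda)=\mu^{\,j-1}\!\left[\binom{n-1}{j}\mu-\binom{n-1}{j-1}\nu\right],
\]
\[
\sigma_{k-1;n}(\lambda)=\binom{n-1}{k-1}\mu^{k-1},\quad
\sigma_{k-1;i}(\lambda)=\mu^{k-2}\!\left[\binom{n-2}{k-1}\mu-\binom{n-2}{k-2}\nu\right]\ \ (i\le n-1).
\]
Using the identity $\binom{m}{j-1}/\binom{m}{j}=j/(m-j+1)$, the relevant conditions reduce to simple linear constraints on the ratio $t=\mu/\nu$: the condition $\lambda\in\Gamma_{k-1}(n)$ becomes $t>(k-1)/(n-k+1)$ (tightest at $j=k-1$), uniform ellipticity $\sigma_{k-1;i}(\lambda)>0$ for every $i$ becomes $t>(k-1)/(n-k)$, and the sign of $\sigma_k(\lambda)$ matches the sign of $t-k/(n-k)$.

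Then I would handle the three cases in turn. For $c>0$, take $\nu=0$ and $\mu=(c/\binom{n-1}{k})^{1/k}$; all conditions are immediate. For $c=0$, set $t=k/(n-k)$ with any $\nu>0$, so that $\sigma_k(\lambda)=0$ automatically, and the chain $k/(n-k)>(k-1)/(n-k)>(k-1)/(n-k+1)$ (valid since $k<n$) guarantees both $\lambda\in\Gamma_{k-1}(n)$ and uniform ellipticity. For $c<0$, pick any $t_0\in\bigl((k-1)/(n-k),\,k/(n-k)\bigr)$, which is a nonempty open interval of length $1/(n-k)$; for such $t_0$, $\sigma_k(\lambda)$ is a strictly negative multiple of $\nu^k$, so scaling $\nu$ yields $\sigma_k(\lambda)=c$ exactly.

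The ordering \eqref{eq:3.6} is then immediate: the $\sigma_{k-1;i}$ for $i\le n-1$ coincide by the symmetry of $\lambda$ in those coordinates, and
\[
\sigma_{k-1;n}(\lambda)-\sigma_{k-1;i}(\lambda)=\binom{n-2}{k-2}\mu^{k-2}(\mu+\nu)>0,
\]
which, together with the descending order of $\lambda$, yields the non--strict increasing chain (alternatively, since $\sigma_{k-1;1}(\lambda)>0$ has now been verified, Lemma \ref{lemma-3th} applies directly). The main obstacle is conceptual rather than computational: one must recognize that a single two--parameter family is rich enough to realize every $c\in\mathbb{R}$, and observe that the two critical thresholds $(k-1)/(n-k)$ (for uniform ellipticity) and $k/(n-k)$ (for the vanishing of $\sigma_k$) differ by exactly $1/(n-k)>0$ precisely because $k<n$. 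This is what opens the window in which $c<0$ can coexist with uniform ellipticity, in perfect agreement with Theorem \ref{1th}(I).
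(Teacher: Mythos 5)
Your proposal is correct, and it takes a genuinely different route from the paper. The paper's proof for $c<0$ is existential: it uses the earlier construction showing $\mathbf{P}_2\neq\emptyset$ to pick some $\delta=(\delta_2,\ldots,\delta_n)\in\mathbf{P}_2\subset\partial\Gamma_{k-1}(n-1)$, perturbs by a small $t>0$ so that $\sigma_{k-1}$ becomes strictly positive while $\sigma_k$ stays negative, prepends a large first coordinate $\delta_1=\delta_2+1$, scales to hit $c$, and then verifies $\lambda\in\Gamma_{k-1}(n)$ and the monotone chain \eqref{eq:3.6} through Lemma \ref{lemma-3th}; for $c>0$ it takes the constant $\lambda$, and for $c=0$ it defers to Theorem \ref{2th}(1). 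Your argument instead restricts from the outset to the explicit two-parameter family $\lambda=(\mu,\ldots,\mu,-\nu)$, under which every relevant quantity factors cleanly and all conditions become threshold inequalities on the single ratio $t=\mu/\nu$. The three thresholds $(k-1)/(n-k+1)<(k-1)/(n-k)<k/(n-k)$ line up in exactly the needed order, and the gap $1/(n-k)$ between the last two is precisely where $k<n$ enters, which makes the possibility of a uniformly elliptic solution with $c<0$ entirely transparent. What your approach buys is concreteness and self-containment (in particular it re-derives $\mathbf{P}_2\neq\emptyset$ as the case $t=k/(n-k)$ rather than relying on it), and the ordering \eqref{eq:3.6} drops out by symmetry without needing Lemma \ref{lemma-3th}. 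What the paper's approach buys is flexibility: its perturbation-and-scale template is recycled almost verbatim to prove Theorem \ref{4th} by swapping $\sigma_k$ for $\sigma_{k+l}$, whereas your computation is tied to one specific family (though it could also be adapted). For the statement at hand your proof is arguably the cleaner one.
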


\begin{proof}
If $c>0$, we take $\lambda_1=\lambda_2=\ldots=\lambda_n=[\frac{c}{(^n_k)}]^{\frac{1}{k}}>0,$ then
$\sigma_k(\lambda)=c$; If $c=0$, see Theorem \ref{2th} (1). It is left to consider
the case $c<0.$

Notice that $\mathbf{P_2}\neq\emptyset$ when $k<n,$ so we can choose $\delta=(\delta_2,\ldots,\delta_n)\in \mathbf{P_2}\subset\partial\Gamma_{k-1}(n-1)$
with $\delta_2\geq \delta_3\geq\ldots\geq \delta_n$, that is,
$$\sigma_{k-1}(\delta_2,\ldots,\delta_n)=0,\sigma_k(\delta_2,\ldots,\delta_n)<0$$
Obviously $\delta_2>0.$ Choosing $1>t>0$ small such that, for $\delta_1=\delta_{2}+1$,
\begin{equation*}
\begin{split}
&\sigma_{k-1}(\delta_2+t,\ldots,\delta_n+t)>0,\\
&\delta_1\sigma_{k-1}(\delta_2+t,\ldots,\delta_n+t)+\sigma_k(\delta_2+t,\ldots,\delta_n+t)<0.
\end{split}
\end{equation*}
Then $$\sigma_k(\delta_1,\delta_2+t,\ldots,\delta_n+t)=\delta_1\sigma_{k-1}(\delta_2+t,\ldots,\delta_n+t)
+\sigma_k(\delta_2+t,\ldots,\delta_n+t)<0.$$
Since $\sigma_k(s\lambda)=s^k\sigma_k(\lambda)$ for $s>0,$ we can choose suitable $s>0$ such that
$$
\sigma_k(s\delta_1,s(\delta_2+t),\ldots,s(\delta_n+t))=c.
$$
Let $\lambda=(s\delta_1,s(\delta_2+t),\ldots,s(\delta_n+t))$, then
$$
\sigma_k(\lambda)=c.
$$
The fact $(\lambda_2,\lambda_3,\ldots,\lambda_n)\in \Gamma_{k-1}(n-1)$  and \eqref{eq:2.12}
lead to
$$
\sigma_l(\lambda_2,\lambda_3,\ldots,\lambda_n)>0,\indent 1\leq l\leq k-1.
$$
Therefore, by virtue of $\lambda_1=s\delta_1>0,$
$$
\sigma_1(\lambda)=\lambda_1+\sigma_1(\lambda_2,\lambda_3,\ldots,\lambda_n)>
\sigma_1(\lambda_2,\lambda_3,\ldots,\lambda_n)>0
$$
and
$$
\sigma_l(\lambda)=\lambda_1\sigma_{l-1}(\lambda_2,\lambda_3,\ldots,\lambda_n)
+\sigma_l(\lambda_2,\lambda_3,\ldots,\lambda_n)>0,\indent 2\leq l\leq k-1.
$$
By the definition of $\Gamma_{k-1}(n)$, we have proved that $\lambda\in \Gamma_{k-1}(n)$. Noticing
$$
\lambda\in \Gamma_{k-1}(n),\indent \lambda_1\geq \lambda_2\ldots\geq \lambda_n,\indent  \sigma_{k-1}
(\lambda_2,\lambda_3,\ldots,\lambda_n)>0
$$
and applying Lemma \ref{lemma-3th}, we see that
 $$
0<\sigma_{k-1;1}(\lambda)\leq \sigma_{k-1;2}(\lambda)\leq\ldots\leq \sigma_{k-1;n}(\lambda),
$$
which leads to  that the operator \eqref{eq:3.7} is uniformly elliptic. This completes the proof.
\end{proof}

\begin{theorem}\label{4th}
For any $0<c,$  there exists $\lambda\in \Gamma_{k}(n)$ such that
$$
  \left\{
\begin{array}{l}
0< \sigma_{k-1;i}(\lambda), \ \ \  1\leq i\leq n \\
 \sigma_{k+l-1}(\lambda)>0, \sigma_{k+l}(\lambda)<0, \ \ \ 1\leq l\leq n-k.
\end{array}\right.
$$
In particular, there exists $\lambda\in \Gamma_{n}(n)$ such that
$$
\sigma_k(\lambda)=c.
$$
Therefore, for $1\leq l\leq n-k$, $\psi=\frac{1}{2}\sum_{i=1}^n\lambda_iy_i^2$ is a $(k+l-1)$ - convex solution of  $k-$Hessian
equation $S_k[\psi]=c$ which is not $(k+l)-$convex. Moreover, the linearized operators of $S_k[\,\cdot\,]$ at $\psi$
$$
\mathcal{L}_\psi=\sum_{i=1}^n\sigma_{k-1,i}(\lambda)\partial_i^2
$$
is uniformly elliptic.
\end{theorem}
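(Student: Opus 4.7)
The plan is to handle the last ``in particular'' assertion separately and then construct one $\lambda^{(l)}$ for each $l$ with $1\le l\le n-k$, by mimicking the level-shifting construction used in the proof of Theorem~\ref{3th}. For the isotropic case $\lambda\in\Gamma_n(n)$, the choice $\lambda_i=(c/\binom{n}{k})^{1/k}$ for every $i$ immediately gives $\lambda\in\Gamma_n(n)$ and $\sigma_k(\lambda)=c$, and by symmetry $\sigma_{k-1;i}(\lambda)=\binom{n-1}{k-1}\lambda_1^{k-1}>0$ for all $i$, so $\mathcal{L}_\psi$ is uniformly elliptic.

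Fix now $l$ with $1\le l<n-k$, so that $k+l-1<n-1$. I would repeat the argument of Theorem~\ref{3th} at the elevated level $m:=k+l$: pick $\tilde\delta=(\delta_{2},\ldots,\delta_{n})\in\mathbf{P}_{2}\subset\partial\Gamma_{m-1}(n-1)$, nonempty by the explicit example given right after Theorem~\ref{1th} applied at level $m$ in dimension $n-1$; perturb each coordinate by a small $t>0$ to push $(\delta_{2}+t,\ldots,\delta_{n}+t)$ strictly into $\Gamma_{m-1}(n-1)$; and adjoin a large first coordinate $\delta_{1}:=\delta_{2}+1$ to form
$$
\lambda_{0}=(\delta_{1},\,\delta_{2}+t,\ldots,\delta_{n}+t).
$$
Expanding $\sigma_{j}(\lambda_{0})=\delta_{1}\sigma_{j-1;1}(\lambda_{0})+\sigma_{j;1}(\lambda_{0})$ exactly as in the proof of Theorem~\ref{3th} and using continuity, one checks $\lambda_{0}\in\Gamma_{m-1}(n)=\Gamma_{k+l-1}(n)$ while $\sigma_{k+l}(\lambda_{0})<0$ comes from $\sigma_{m}(\tilde\delta)<0$ together with small perturbation estimates. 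Since $\Gamma_{k+l-1}(n)\subset\Gamma_{k}(n)$, one has $\sigma_{k}(\lambda_{0})>0$, and rescaling by $s=(c/\sigma_{k}(\lambda_{0}))^{1/k}$ produces $\lambda^{(l)}=s\lambda_{0}$ with $\sigma_{k}(\lambda^{(l)})=c$ and all sign conditions preserved by homogeneity $\sigma_{j}(s\lambda_{0})=s^{j}\sigma_{j}(\lambda_{0})$.

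For the borderline case $l=n-k$, where $\mathbf{P}_{2}$ in dimension $n-1$ at level $n-1$ degenerates, I would take instead the direct choice $\lambda_{0}=(M,\ldots,M,-\epsilon)$ with $M\gg\epsilon>0$. An elementary expansion yields $\sigma_{j}(\lambda_{0})=\binom{n-1}{j}M^{j}-\binom{n-1}{j-1}\epsilon\,M^{j-1}>0$ for $1\le j<n$ once $M$ is large and $\sigma_{n}(\lambda_{0})=-\epsilon M^{n-1}<0$, after which the same rescaling applies. In every case, uniform ellipticity of $\mathcal{L}_\psi=\sum_{i}\sigma_{k-1;i}(\lambda)\partial_{i}^{2}$ follows from Lemma~\ref{lemma-3th}: the constructed $\lambda$ sits in $\Gamma_{k-1}(n)$ (since $\Gamma_{k+l-1}(n)\subset\Gamma_{k-1}(n)$) with $\sigma_{k-1;1}(\lambda)>0$, which forces the full ordering $0<\sigma_{k-1;1}(\lambda)\le\cdots\le\sigma_{k-1;n}(\lambda)$; strict positivity of $\sigma_{k-1;1}(\lambda)$ itself comes from $\lambda\in\Gamma_{k}(n)$ together with Proposition~\ref{prop2.1}(ii) applied to the relevant descending ordering. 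The only delicate point I expect is tracking that the strict inequality $\sigma_{k+l}(\lambda_{0})<0$ survives both the $t$-perturbation and the insertion of the large $\delta_{1}$ in the generic $l$ case---but this is a routine continuity estimate in $t$, entirely parallel to the one already done inside Theorem~\ref{3th}.
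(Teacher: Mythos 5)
Your plan is essentially the paper's proof: the paper disposes of Theorem~\ref{4th} in one line by ``replacing $\sigma_k$ by $\sigma_{k+l}$'' in the Theorem~\ref{3th} construction and then rescaling, and your generic-$l$ argument is exactly that. Where you go beyond the paper is in noticing that this replacement cannot cover the endpoint $l=n-k$: the shifted construction needs a nonempty $\mathbf{P}_2\subset\partial\Gamma_{k+l-1}(n-1)$, and when $k+l-1=n-1$ that set is empty by \eqref{eq:1.7}. The paper's terse proof asserts the shifted construction works ``for $1\le l\le n-k$'' without addressing this, so you have identified a genuine gap rather than over-engineering. Your fix $\lambda_0=(M,\ldots,M,-\epsilon)$ with $M\gg\epsilon>0$ is correct and elementary: indeed $\sigma_j(\lambda_0)=\binom{n-1}{j}M^j-\binom{n-1}{j-1}\epsilon M^{j-1}>0$ for $1\le j\le n-1$ once $M>(n-1)\epsilon$, while $\sigma_n(\lambda_0)=-\epsilon M^{n-1}<0$, and homogeneity preserves all sign conditions under the rescaling $s=(c/\sigma_k(\lambda_0))^{1/k}$. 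You also supply the isotropic $\lambda$ for the ``in particular'' clause, which the paper leaves implicit. The one inaccuracy is the closing remark on ellipticity: you attribute the strict inequality $\sigma_{k-1;1}(\lambda)>0$ to Proposition~\ref{prop2.1}(ii), but that proposition only yields $\ge 0$ on $\overline\Gamma_k$. The strict positivity you need follows instead from $\lambda\in\Gamma_k(n)=\widetilde\Gamma_k(n)$ (Theorem~\ref{1p}, i.e.\ definition \eqref{eq:2.12}) or equivalently from the by-product \eqref{eq:2.16}; once you have $\sigma_{k-1;1}(\lambda)>0$ the monotone chain is available and uniform ellipticity follows as you say. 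This is a citation slip, not a gap in the argument.
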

\begin{proof}
 In the proof of Theorem \ref{3th},  replacing $\sigma_k$ by $\sigma_{k+l}$, we obtain that $\lambda\in \Gamma_{k+l-1}(n)$ with $\sigma_{k+l}(\lambda)<0$ for $1\leq l\leq n-k$. For this $\lambda$,
choosing $s>0$ such that $\sigma_{k}(s\lambda)=c.$  The other part of proof is the same as those in Theorem \ref{3th}.
\end{proof}


\section{Existence of $C^\infty$ local Solutions}\label{section3}

In this section, by the classification of precedent section,
we now prove  Theorem \ref{th:Main1} which is stated in the following precise version.

\begin{theorem}\label{th:M1}
For $2\leq k\leq n-1,$ let $f=f(y,u,p)$ be defined and continuous near a point $Z_0=(0, 0, 0)\in \mathbb{R}^n\times\mathbb{R}\times\mathbb{R}^{n}$ and $0<\alpha<1$. Assume that $f$ is $C^\alpha$ with
respect to $y$ and  $C^{2,1}$ with respect to $u, p$. We have the following results
\begin{itemize}
 \item[(1)] If  $f(Z_0)=0$, then \eqref{eq:1.2} admits  a $(k-1)$-convex local solution $C^{2, \alpha}$ near $y_0=0$,
which is not $(k+1)$-convex and of  the following form
\begin{equation}\label{eq:7.1}
u(y)=\frac{1}{2}\sum^{n}_{i=1}\tau_{i}y^{2}_{i}+\varepsilon'\varepsilon^{4}w(\varepsilon^{-2}
y)
\end{equation}
with arbitrarily fixed $(\tau_{1},\ldots,\tau_{n})\in \mathbf{P}_{2}$, $\varepsilon'$ is defined in \eqref{eq:1.5BB} and $\varepsilon>0$ very small, the function $w$ satisfies
\begin{equation}\label{eq:7.2}\left\{
\begin{array}{l}
\|w\|_{C^{2,\alpha}}\leq 1\\
w(0)=0,\nabla w(0)=0.
\end{array}\right.
\end{equation}

\item [(2)]  If $f\geq 0$ near $Z_0$, then the equation \eqref{eq:1.2}  admits  a $k$ -convex local solution $u\in
C^{2, \alpha}$ near $y_0=0$  which  is not $(k+1)$ - convex and  of the form \eqref{eq:7.1}.
\end{itemize}
Moreover, the equation \eqref{eq:1.2} is uniformly elliptic with respect to the solution
\eqref{eq:7.1}. If $f\in C^\infty$ near $Z_0$, then $u\in C^\infty$ near $y_0$.
\end{theorem}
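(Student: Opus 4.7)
The plan is to substitute \eqref{eq:7.1} into $S_k[u]=f(y,u,Du)$ and rewrite the problem as a perturbation problem for $w$ on the unit ball. With $\psi(y)=\tfrac12\sum\tau_iy_i^2$, $\tau\in\mathbf P_2$, and $x=\varepsilon^{-2}y$, the chain rule gives $D^2u(y)=D^2\psi+\varepsilon'D_x^2w(x)$. Expanding $S_k$ around $D^2\psi$ and using $S_k[\psi]=\sigma_k(\tau)=0$ (by the definition of $\mathbf P_2$), the equation becomes
\[
\varepsilon'\,\mathcal L_\psi w+Q(\varepsilon' D^2w)=f\bigl(\varepsilon^2x,\psi(\varepsilon^2x)+\varepsilon'\varepsilon^4w,\;D\psi(\varepsilon^2x)+\varepsilon'\varepsilon^2 Dw\bigr),
\]
where $Q$ collects the quadratic and higher order terms in $\varepsilon' D^2w$. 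Taylor expanding the right hand side around $Z_0=(0,0,0)$, and using $f(Z_0)=0$ in case (1), one sees that the rescaling \eqref{eq:1.5BB} is chosen precisely to balance the $C^\alpha$ modulus of $f$ in $y$ (contributing an $\varepsilon^{2\alpha}$ factor) against the polynomial terms carried by $\psi$ and $D\psi$, so that after dividing by $\varepsilon'$ the equation takes the form $\mathcal L_\psi w+N(w)=g(x;\varepsilon)$ on $|x|\le 1$, with $\|g\|_{C^\alpha}=o(1)$ as $\varepsilon\to 0^+$ and $N$ a smooth nonlinear perturbation of order $\varepsilon'$ in $C^{2,\alpha}$.

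\textbf{Uniform ellipticity of the linearisation.} By Theorem \ref{2th}(1), since $\tau\in\mathbf P_2$, the principal part $\mathcal L_\psi=\sum_i\sigma_{k-1;i}(\tau)\partial_i^2$ has strictly positive constant coefficients, hence is uniformly elliptic with a fixed ellipticity constant depending only on $\tau$. This is the crucial input from Section \ref{section2}: it allows Schauder theory to apply to the linearised problem $\mathcal L_\psi v=h$ at each step, with estimates $\|v\|_{C^{2,\alpha}}\le C\|h\|_{C^\alpha}$ independent of $\varepsilon$. In case (2) with $f\ge 0$, the same choice $\tau\in\mathbf P_2$ works; the extra nonnegative contribution $f/\varepsilon'$ is absorbed into $g$, and because $\|w\|_{C^{2,\alpha}}\le 1$ with $\varepsilon'$ small, the eigenvalues of $D^2u$ are a small perturbation of $\tau$, hence stay in $\Gamma_k$ by a continuity argument, giving $k$-convexity.

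\textbf{Nash--Moser iteration.} Because $f$ is only $C^\alpha$ in $y$, we cannot iterate by a straight contraction/implicit function argument: differentiating the equation costs $\alpha$ units of regularity in $y$. The standard remedy is a Nash--Moser scheme in the Hölder scale with smoothing operators $(S_{\theta_j})_{j\ge 0}$, $\theta_j\nearrow\infty$ geometrically. At step $j$ one solves $\mathcal L_\psi v_j=-\bigl(\mathcal L_\psi w_j+N(w_j)-g\bigr)$ using the Schauder estimate just mentioned, then sets $w_{j+1}=w_j+S_{\theta_j}v_j$. The hypothesis that $f$ is $C^{2,1}$ with respect to $(u,p)$ is used precisely here: it produces a quadratic estimate of the form $\|F(w+v)-F(w)-F'(w)v\|_{C^\alpha}\le C\|v\|_{C^{2}}^{\,2}$ (as alluded to in \eqref{eq:7.15}), which is exactly what is needed to control the Nash--Moser quadratic error and to close the standard convergence lemma. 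Smallness of $g$ (secured by the choice \eqref{eq:1.5BB}) guarantees the scheme converges to a fixed point $w\in C^{2,\alpha}$ with $\|w\|_{C^{2,\alpha}}\le 1$ satisfying \eqref{eq:7.2}.

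\textbf{Regularity and the main obstacle.} Once $w$ is obtained, $u$ given by \eqref{eq:7.1} is a $C^{2,\alpha}$ solution of \eqref{eq:1.2}, and the linearised operator at $u$ is a small perturbation of $\mathcal L_\psi$, hence uniformly elliptic. If $f\in C^\infty$ near $Z_0$, differentiating the equation and repeatedly applying interior Schauder estimates (with coefficients now in $C^{m,\alpha}$ for increasing $m$, thanks to uniform ellipticity) bootstraps $u$ to $C^\infty$. The main obstacle throughout is the delicate tuning in the Nash--Moser step: one has to pick the smoothing exponents $\theta_j$, the Hölder indices, and the two scales $\varepsilon,\varepsilon'$ so that the linear Schauder gain, the $\varepsilon^{2\alpha}$ Hölder loss in $y$, and the quadratic error bound from $C^{2,1}$ regularity in $(u,p)$ are simultaneously consistent; this is precisely what forces the split between $0<\alpha\le\tfrac12$ and $\tfrac12<\alpha<1$ in the definition of $\varepsilon'$.
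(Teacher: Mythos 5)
Your setup (the ansatz \eqref{eq:7.1}, the rescaling to $B_1(0)$, and the use of Theorem~\ref{2th}(1) to get uniform ellipticity from $\tau\in\mathbf P_2$) matches the paper, but your description of the iteration and the convexity conclusion both contain genuine errors.

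First, you invoke a full Nash--Moser scheme with smoothing operators $S_{\theta_j}$, justified by the claim that ``differentiating the equation costs $\alpha$ units of regularity in $y$.'' That is not what is happening here. Because $\tau\in\mathbf P_2$ makes the linearization uniformly elliptic with a constant ellipticity bound, Schauder theory gains exactly two derivatives ($C^{\alpha}\to C^{2,\alpha}$) at each step, while $G$ (the nonlinearity of the $k$-Hessian operator and of $f$) maps $C^{2,\alpha}\to C^{\alpha}$. There is no loss of derivatives to compensate, so no smoothing operators are needed. The paper's scheme \eqref{eq:7.6} is simply Newton's method: $L_G(w_m)\rho_m=g_m=-G(w_m)$, with the linearization taken at the \emph{current iterate} $w_m$, not at the fixed constant-coefficient operator $\mathcal L_\psi$ as you wrote. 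That distinction matters: linearizing at $w_m$ is what produces the quadratic error estimate $\|g_{l+1}\|_{C^\alpha}\le C\|g_l\|_{C^\alpha}^2$ of Proposition~\ref{nms}, and hence the rapid convergence $\|g_{l+1}\|_{C^\alpha}\le(1/2)^{2^l}$. Your scheme, which freezes the linear operator at $\mathcal L_\psi$ and relies on smoothing, is a structurally different (and substantially more complicated) iteration; as written its convergence is not established.

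Second, your argument for $k$-convexity in case~(2) does not work: you write that the eigenvalues of $D^2u$ are a small perturbation of $\tau$, ``hence stay in $\Gamma_k$ by a continuity argument.'' But $\tau\in\mathbf P_2\subset\partial\Gamma_k$, so $\sigma_k(\tau)=0$; an arbitrarily small perturbation can push $\sigma_k(D^2u)$ negative. The $k$-convexity must instead be read off the equation itself: the strict inequalities $\sigma_j(\tau)>0$ for $j\le k-1$ are stable, and $S_k[u]=f\ge0$ holds by hypothesis once the equation is solved, which together give $(D^2u)\in\overline\Gamma_k$ and hence $k$-convexity. Finally, you skip the actual Schauder difficulty the paper must address: the zeroth-order coefficient $a=-\varepsilon^4\partial_zf$ in $L_G$ need not be nonpositive, so existence and the maximum principle are not immediate; this is handled in Lemma~\ref{lemma:schauder} by exploiting the $\varepsilon^4$ smallness of $a$ (via Theorem~3.7, Corollary~3.8, and the Fredholm alternative in Gilbarg--Trudinger). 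Without that step the Schauder estimates you rely on are not justified.
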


Theorem \ref{th:M1} is exactly the part (1) and (2) of Theorem \ref{th:Main1}.

We now proceed to take the change of unknown function  $u\leftrightarrow w$ and the change of variable $y\leftrightarrow x$, the aim is to consider the equation \eqref{eq:1.2} in the domain $B_1(0)$ with the new variable $x$ and then the so-called "local" enter into the new equation itself, see \eqref{eq:7.3}-\eqref{eq:7.4}. The trick is from Lin \cite{Lin}. Let $\tau=(\tau_1,\ldots,\tau_n)\in\mathbf{P}_{2}$, then
$\psi(y)=\frac{1}{2}\sum_{i=1}^n\tau_i y_i^2$ is a polynomial-type solution
of
$$
S_k[\psi]=0.
$$
We follow Lin \cite{Lin} to introduce the following function
$$
u(y)=\frac{1}{2}\sum_{i=1}^n\tau_i
y_i^2+\varepsilon'\varepsilon^{4}w(\varepsilon^{-2}y)
=\psi(y)+\varepsilon'\varepsilon^{4}w(\varepsilon^{-2}y),\indent
\tau\in\mathbf{P}_{2},\,\,\varepsilon>0,
$$
as a candidate of solution for equation \eqref{eq:1.1}. Noting $y=\varepsilon^2 x,$
we have
$$
(D_{y_j} u)(x)=\tau_j \varepsilon^2 x_j+\varepsilon'\varepsilon^2 w_j(x),\indent j=1, \cdots, n,
$$
and
$$
(D_{y_jy_k} u)(x)=\delta^j_k \tau_j +\varepsilon' w_{jk}(x),\indent
j, k=1, \cdots, n,
$$
where $\delta^j_k$ is the Kronecker symbol, $w_j(x)=(D_{y_j} w)(x)$ and $w_{jk}(x)=(D^2_{y_{jk}} w)(x)$. Then \eqref{eq:1.2} transfers  to
$$
\tilde{S}_{k}(w)=\tilde{f}_\varepsilon(x,w(x),Dw(x)),\indent x\in B_1(0)=\{x\in
\mathbb{R}^{n}; |x|< 1\},
$$
where
$$
\tilde{S}_{k}[w]=S_{k}(\delta_{i}^{j}\tau_i+\varepsilon'
w_{ij}(x))=S_{k}(r(w)),
$$
with symmetric matrix $r(w)=(\delta_{i}^{j}\tau_i+\varepsilon' w_{ij}(x))$, and
$$
\tilde{f}_\varepsilon(x,w(x),Dw(x))=f(\varepsilon^{2}x,
\varepsilon^{4}\psi(x)+\varepsilon'\varepsilon^{4}w(x),\tau_1 \varepsilon^2 x_1+\varepsilon'\varepsilon^2 w_1(x),
\cdots, \tau_n \varepsilon^2 x_n+\varepsilon'\varepsilon^2 w_n(x)).
$$

 We now explain the smooth condition on $f=f(x, u, p)$ and its norms, that is,
$f$ is H\"{o}lder continuous with respect to $x$, denoted by $f\in C^\alpha_x$, and $C^{2,1}$ with respect to $u,p$, denoted by $C^{2,1}_{u,p}$. We will consider $f=f(x,u,p)$ defined on
$$
\mathcal{B}=\left\{(x,u,p)\in R^n\times R \times R^n:x\in B_1(0), |u|\leq A, |p|\leq A \right\}
$$
for some fixed $A>0$. We say $f\in C^\alpha_x$ if
$$
\|f\|_{C^{\alpha}_{x}}=\|f\|_{L^\infty(\mathcal{B})}+
\sup_{(x, u, p)\in \mathcal{B},  (z, u, p)\in \mathcal{B},x\neq z} \frac{|f(x,u,p)-f(z,u,p)|}{|x-z|^\alpha} <\infty.
$$
When $f=f(x)$, then $f\in C^\alpha_x$ is the usual $f\in C^\alpha(B_1(0)).$ When defining $f=f(z,u,p)\in C^{2,1}_{u,p}$, we regard $z$ as a parameter as follow:
 $$
\|f\|_{C^{1,1}_{u,p}}=\sup_{\mathcal{B}}\left\{|D^{\beta}_{u,p}f(z,u,p)|:0\leq|\beta|\leq2\right\}<\infty
$$
and
$$
\|f\|_{C^{2,\alpha}_{u,p}}=\|f\|_{C^{1,1}_{u,p}}+
\sup_{(z, u, p)\in \mathcal{B},  (z, u', p')\in \mathcal{B},(u,p)\neq (u',p')}\left\{\frac{|D^{\beta}_{u,p} f(z,u,p)-
D^{\beta}_{u,p} f(z,u',p')|}{|(u,p)-(u',p')|^\alpha},|\beta|=2\right\}<\infty,
$$
for $0<\alpha< 1$ and a similar definition for $\|f\|_{C^{2, 1}_{u,p}}$.  Here and later on, without confusion, we will denote $\|f\|_{C^{\alpha}_{x}}, \|f\|_{C^{1,1}_{u,p}}$  and $\|f\|_{C^{2, 1}_{u,p}}$ as $\|f\|_{C^{\alpha}}, \|f\|_{C^{1,1}}$ and $\|f\|_{C^{2, 1}}$ respectively .

Similar to \cite{Lin} we consider the nonlinear operators
\begin{equation}\label{eq:7.3}
G(w)=\frac{1}{\varepsilon'}[{S}_{k}(r(w))-\tilde{f_\varepsilon}(x, w,
Dw)],\qquad \mbox{in}\,\,\, B_1(0).
\end{equation}
The linearized operator of $G$ at $w$ is
\begin{equation}\label{eq:7.4}
L_{G}(w)=\sum_{i,j=1}^{n}\frac{\partial S_{k}(r(w))}{\partial
r_{ij}}\partial^2_{ij}+\sum_{i=1}^{n}a_{i}\partial_{i}+a,
\end{equation}
where
$$
a_{i}=-\frac{1}{\varepsilon'}\frac{\partial
\widetilde{f_\varepsilon}(x,z,p_{i})}{\partial
p_{i}}(x,w,Dw)=-\varepsilon^2\frac{\partial f}{\partial p_{i}}
$$
$$
a=-\frac{1}{\varepsilon'}\frac{\partial
\widetilde{f_\varepsilon}(x,z,p_{i})}{\partial
z}(x,w,Dw)=-\varepsilon^{4}\frac{\partial f}{\partial z}.
$$
Hereafter, we denote $S_{k}^{ij}(r(w))=\frac{\partial
S_{k}(r(w))}{\partial r_{ij}}$.

\begin{lemma}\label{lm:4.2}
Assume that $\tau\in\mathbf{P}_{2}$ and $\|w\|_{C^{2}(B_1(0))}\leq 1$,
 then the operator $L_{G}(w)$ is a uniformly elliptic operator if $\varepsilon>0$ is small enough.
\end{lemma}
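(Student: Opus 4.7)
The plan is to verify uniform ellipticity of the principal part $(S_k^{ij}(r(w)))_{i,j}$ of $L_G(w)$ via a small-perturbation argument around the constant symmetric matrix $D := \mathrm{diag}(\tau_1, \ldots, \tau_n)$. The first- and zeroth-order coefficients, $a_i = -\varepsilon^2 \partial_{p_i} f$ and $a = -\varepsilon^4 \partial_z f$, are uniformly bounded by $C\varepsilon^2$ in view of the $C^{2,1}$ hypothesis on $f$, so they do not affect the ellipticity condition.

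First I would compute $S_k^{ij}$ at $\varepsilon = 0$, i.e.\ at $r = D$. For a diagonal matrix the standard identities for the $k$-Hessian give
$$S_k^{ij}(D) = \sigma_{k-1;i}(\tau) \, \delta_i^j,$$
the vanishing of the off-diagonal entries coming from the fact that, for $i \neq j$, after deleting row $i$ and column $j$ from any principal $k$-submatrix $r_I$ with $\{i,j\}\subset I$, the row labelled $j$ contains only off-diagonal entries of $r$, which vanish at $D$. Because $\tau \in \mathbf{P}_2$ means $\sigma_k(\tau)=0$ and $\sigma_{k+1}(\tau)<0$, Theorem \ref{1th} (I) gives $\sigma_{k-1;i}(\tau) > 0$ for every $i$, hence
$$\mu_0 := \min_{1 \le i \le n} \sigma_{k-1;i}(\tau) > 0.$$

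Next I would exploit that each $S_k^{ij}(r)$ is a polynomial of degree $k-1$ in the entries of $r$. Since $r(w)(x) = D + \varepsilon'(w_{ij}(x))$ with $|\varepsilon' w_{ij}(x)| \le \varepsilon'$ on $B_1(0)$ under the bound $\|w\|_{C^2} \le 1$, a continuity estimate yields
$$\bigl|S_k^{ij}(r(w)(x)) - \sigma_{k-1;i}(\tau)\, \delta_i^j\bigr| \le C(\tau, n, k)\, \varepsilon'$$
uniformly in $x \in B_1(0)$. Because $\varepsilon' \to 0^+$ as $\varepsilon \to 0^+$, choosing $\varepsilon$ small enough forces the symmetric matrix $(S_k^{ij}(r(w)(x)))$ to have all eigenvalues in $[\mu_0/2,\; 2\max_i \sigma_{k-1;i}(\tau)]$ at every $x$, independently of $w$ as long as $\|w\|_{C^2} \le 1$. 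This is exactly the uniform ellipticity of $L_G(w)$.

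I do not anticipate a substantial obstacle. The only delicate point is the strict positivity $\sigma_{k-1;i}(\tau) > 0$ for all $i$, which is precisely the nontrivial content of Theorem \ref{1th} (I)---the ellipticity ``breakthrough'' emphasized in Section \ref{section2}, without which a $\tau \in \mathbf{P}_2$ would yield only degenerate ellipticity. The rest is pure perturbation, together with the observation that the lower-order terms are harmless.
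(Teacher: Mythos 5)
Your proposal is correct and follows essentially the same strategy as the paper: compute $S_k^{ij}$ at the unperturbed diagonal matrix $\mathrm{diag}(\tau)$ to get $\sigma_{k-1;i}(\tau)\delta_i^j$, invoke Theorem~\ref{1th}~(I) for the strict positivity $\sigma_{k-1;i}(\tau)>0$, and then absorb the $O(\varepsilon')$ perturbation coming from $r(w)=\mathrm{diag}(\tau)+\varepsilon'(w_{ij})$ under $\|w\|_{C^2}\le 1$. The only cosmetic difference is that the paper closes the argument via strict diagonal dominance of $(S_k^{ij}(r(w)))$, whereas you appeal to eigenvalue continuity; both are equivalent small-perturbation routes to the same uniform ellipticity bound.
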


\begin{proof}
In order to prove the uniform ellipticity of
$L_G(w)$
$$
\sum_{i,j=1}^nS_{k}^{ij}(r(w)\xi_i\xi_j\geq c|\xi|^2,\indent \forall (x,\xi)\in B_1(0)\times \mathbb{R}^n,
$$
 it suffices to prove
\begin{equation}\label{eq:7.5}\left\{
\begin{array}{l}
S_{k}^{ii}(r(w))=\sigma_{k-1;i}(\tau_1,\tau_2,\ldots,\tau_n)+O(\varepsilon'), \indent 1\leq i\leq n\\
S_{k}^{ij}(r(w))=O(\varepsilon'), \indent i\neq j,
\end{array}\right.
\end{equation}
because if it does hold, we see by \eqref{eq:3.6} that
 $$
S_{k}^{ii}(r(w))-\sum_{j=1,j\neq i}|S_{k}^{ij}(r(w))|>
 \frac{1}{2}\sigma_{k-1,i}(\tau_1,\ldots,\tau_n)>0,\indent 1\leq i\leq n
$$
if $\varepsilon>0$ is small enough, then  the matrix $(S_{k}^{ij}(r(w)))$
 is strictly diagonally dominant  and  $L_{G}(w)$ is a uniformly elliptic operator.

Indeed, Since $S_k(r)$ is the sum all principal minors of order $k$
of the Hessian $\det(r)$ , then
$$
S_{k}^{ll}(r(w))=S_{k-1}(r(w;l,l))
$$
where $r(w;l,l)$ is a $(n-1)\times(n-1)$ matrix determined from $r$ by deleting the
$l-th$ row and $l-th$ column. But $r(w)=(\delta_{i}^{j}\tau_i+\varepsilon w_{ij}(x))$, then
$$
S_{k-1}(r(w;l,l))=\sigma_{k-1;l}(\tau_1,\tau_2,\ldots,\tau_n)+O(\varepsilon')
$$
and
$$
S_{k}^{ij}(r(w))=O(\varepsilon'),\indent i\neq j.
$$
Proof is done.
\end{proof}

We follows now the idea of Hong and Zuily \cite{HZ} to prove the
existence and a priori estimates of solution for linearized
operator.  In fact, if  following the proof of \cite{HZ} step by step, we can also obtain
the existence of the local solution if $f$ is smooth enough, the reason is that $L_{G}(w)$ being
uniformly elliptic  can regarded as a special case of  $L_{G}(w)$ being degenerately elliptic
in \cite{HZ}. But if $f\in C^\alpha_x,$ which is the least requirement in classical Schauder estimates,
their proof \cite{HZ} does not work anymore because the degeneracy results in the loss of regularity. 
In our case, although $L_{G}(w)$ is uniformly elliptic,
the existence and the priori Schauder estimates of classical solutions
can not be directly obtained. The difficulty lies in that we do not know whether the
coefficient $a$ of the term $a\, u$ in \eqref{eq:7.4} is non-positive. After
proving the existence of the linearized equation (Lemma \ref{lemma:schauder}),  we can  employ Nash-Moser
procedure to prove the existence of local solution for
\eqref{eq:1.2} in H\"{o}lder space.  We shall use the following schema :
\begin{equation}\left\{ \label{eq:7.6}
\begin{array}{l}
w_{0}=0,\indent w_{m}=w_{m-1}+\rho_{m-1},\,\, m\ge 1,\\
L_{G}(w_{m})\rho_{m}=g_{m},\text {in}\ \ B_1(0),\\
\rho_{m}=0\indent \text{on}\ \  \partial B_1(0),\\
g_{m}=-G(w_{m})\, ,\\
\end{array}\right.
\end{equation}
where
$$
g_0(x)=\frac{1}{\varepsilon'}\Big(\sigma_k(\tau)-f\big(\varepsilon^2 x, \varepsilon^4
\psi(x), \varepsilon^2(\tau_1 x_1,\tau_2 x_2,\ldots,\tau_n x_n)\big)\Big)\, .
$$

It is pointed out on page 107, \cite{GT} that, if the operator
$L_{G}$ does not satisfy the condition $a\leq 0,$ as is well known
from simple examples, the Dirichlet problem for $L_{G}(w)\rho=g$ no
longer has a solution in general. Notice $a$ in \eqref{eq:7.4} has
the factor $\varepsilon^4$, we will take advantage of the smallness of
$a$ to obtain the uniqueness and existence of solution for Dirichlet
problem \eqref{eq:7.7}. We will assume $\|w_{m}\|_{C^{2,\alpha}}\leq A$ rather than
$\|w_{m}\|_{C^{2,\alpha}}\leq 1$ as in \cite{HZ}, the advantage
 is to see how the procedure depends on $A$. Actually $A$ can be taken as 1. We have uniformly Schauder estimates of its solution as follows.

\begin{lemma}\label{lemma:schauder}
Assume that $\|w\|_{C^{2, \alpha}(B_1(0))}\le A$. Then there exists
a unique solution  $\rho\in C^{2, \alpha}(\overline{B_1(0)})$ to the
following Dirichlet problem
\begin{equation}\label{eq:7.7}\left\{
\begin{array}{l}
L_{G}(w)\rho=g,\quad \text {in}\ \ B_1(0),\\
\rho=0\indent \text{on}\ \  \partial B_1(0)\,
\end{array}\right.
\end{equation}
for all $g\in  C^{\alpha}(\overline{B_1(0)}).$ Moreover,
\begin{equation}\label{eq:7.8}
\|\rho\|_{C^{2, \alpha}(\overline{B_1(0)})}\le C\|g\|_{C^{
\alpha}(\overline{B_1(0)})}, \quad \forall g\in C^{
\alpha}(\overline{B_1(0)}),
\end{equation}
where the constant $C$ depends on $A, \tau$ and
$\|f\|_{C^{2,1}}$. Moreover, $C$ is independent of    $0<\varepsilon\le \varepsilon_0$
 for some $\varepsilon_0>0$.
\end{lemma}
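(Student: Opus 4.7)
The plan is to combine a Fredholm-alternative argument with the smallness of the zero-order coefficient $a=-\varepsilon^4\partial_zf$ in order to circumvent the sign ambiguity that blocks a direct application of Gilbarg--Trudinger's Theorem 6.15.

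As a preliminary, I would check that the coefficients of $L_G(w)$ lie in $C^\alpha(\overline{B_1(0)})$ with norms bounded by a constant $K=K(A,\tau,\|f\|_{C^{2,1}})$ independent of $\varepsilon\le\varepsilon_0$. The principal coefficients $S_k^{ij}(r(w))$ are polynomials in $r(w)=\mathrm{diag}(\tau)+\varepsilon' D^2w$ and thus $C^\alpha$ because $D^2w\in C^\alpha$. The coefficients $a_i$ and $a$ are first-order derivatives of $f$ evaluated at a $C^{1,\alpha}$ triple in $x$; since $f\in C^\alpha_x\cap C^{2,1}_{u,p}$, the first partials are $\alpha$-Hölder in $x$ and Lipschitz in $(u,p)$, so composition yields $C^\alpha$ functions. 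By Lemma \ref{lm:4.2} the principal part is uniformly elliptic once $\varepsilon$ is small, with ellipticity constants depending only on $\tau$.

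For uniqueness of the homogeneous problem, assume $\rho_0\in C^{2,\alpha}(\overline{B_1(0)})$ satisfies $L_G(w)\rho_0=0$ and $\rho_0|_{\partial B_1(0)}=0$. Moving the zero-order term to the right I get
$$\sum_{i,j}S_k^{ij}(r(w))\partial^2_{ij}\rho_0+\sum_i a_i\partial_i\rho_0=-a\rho_0,$$
whose left-hand side is zero-order-free. The Alexandrov--Bakelman--Pucci maximum principle yields
$$\sup_{B_1(0)}|\rho_0|\le C\|a\rho_0\|_{L^n(B_1(0))}\le C\varepsilon^4\sup|\rho_0|,$$
with $C$ depending only on $n$, the ellipticity of the principal part, and $\|a_i\|_{L^\infty}$. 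Choosing $\varepsilon_0>0$ so that $C\varepsilon_0^4\le 1/2$ forces $\rho_0\equiv 0$. Existence then follows from the Fredholm alternative: for $\lambda>0$ large enough that $a-\lambda\le 0$ in $\overline{B_1(0)}$, Theorem 6.15 of Gilbarg--Trudinger produces a continuous inverse $(L_G(w)-\lambda)^{-1}\colon C^\alpha\to C^{2,\alpha}$, which is compact when composed with the inclusion $C^{2,\alpha}\hookrightarrow C^\alpha$, and solvability of \eqref{eq:7.7} reduces to the injectivity just established.

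Finally, for the $\varepsilon$-uniform Schauder bound \eqref{eq:7.8}, I would apply the global Schauder estimate (Gilbarg--Trudinger Theorem 6.6) to obtain
$$\|\rho\|_{C^{2,\alpha}(\overline{B_1(0)})}\le C(K,\tau)\bigl(\|\rho\|_{C^0(\overline{B_1(0)})}+\|g\|_{C^\alpha(\overline{B_1(0)})}\bigr),$$
and control $\|\rho\|_{C^0}$ by a second ABP application, this time to the equation $L_0\rho=g-a\rho$ with $L_0$ the zero-order-free part: $\sup|\rho|\le C\|g\|_{L^\infty}+C\varepsilon^4\sup|\rho|$, whence $\sup|\rho|\le 2C\|g\|_{C^\alpha}$ for $\varepsilon\le\varepsilon_0$. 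The main obstacle throughout is precisely the possibly positive sign of $a$ flagged by the authors; both the uniqueness step and the $C^0$ step rely decisively on the fact that $|a|=O(\varepsilon^4)$ can be made arbitrarily small by further shrinking $\varepsilon_0$, so that the absorbing arguments above close up.
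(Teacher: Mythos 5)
Your argument follows the same route as the paper's: establish uniqueness (hence, by the Fredholm alternative of Gilbarg--Trudinger Theorem 6.15, existence) by absorbing the small zero-order coefficient $a=O(\varepsilon^4)$ in a maximum principle, and then obtain the uniform bound \eqref{eq:7.8} from a global Schauder estimate combined with the $C^0$ estimate. The only technical variation is that you invoke the Alexandrov--Bakelman--Pucci estimate where the paper uses the classical pointwise maximum principles (GT Theorem 3.7 and Corollary 3.8, the latter of which already builds in the smallness-absorption for a sign-indefinite $c$), and you re-derive the Fredholm alternative via the resolvent $(L_G-\lambda)^{-1}$ rather than citing it directly; both are sound and lead to the same conclusion with the same constant dependence.
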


By the virtue of \eqref{eq:7.4}, we write \eqref{eq:7.7} as
\begin{equation}\label {eq:7.9}\left\{
\begin{array}{l}
L_{G}(w)\rho=\sum_{i,j=1}^{n}\frac{\partial S_{k}(r(w))}{\partial
r_{ij}}\partial_i\partial_j\rho+\sum_{i=1}^{n}a_{i}\partial_{i}\rho+a\rho=g,\quad
\text {in}\ \ B_1(0),\\
 \rho=0\indent \text{on}\ \  \partial B_1(0)\,
\end{array}\right.
\end{equation}
where
$$
a_{i}=-\varepsilon^2\frac{\partial  f}{\partial p_{i}},\quad
a=-\varepsilon^{4}\frac{\partial f}{\partial z}.
$$
Noticing that for the functions, such as  $\frac{\partial S_{k}(r(w))}{\partial r_{ij}}$,
$a_i=a_i(x,w(x),Dw(x))$, $a=a(x,w(x),Dw(x))$ and
$g_m=-G(w_m)=g_m(x,w_m(x),Dw_m(x),D^2w_m(x))$    by \eqref{eq:7.6}, we
regard them as the functions with variable $x$. In a word, we regard
that all of the coefficients and non-homogeneous term in
\eqref{eq:7.9} are functions of variable $x.$ For example,
\begin{equation*}
\begin{split}
&\tilde{f}_\varepsilon(x,w(x),Dw(x))\\
=&f(\varepsilon^{2}x,
\varepsilon^{4}\psi(x)+\varepsilon'\varepsilon^{4}w(x),\tau_1 \varepsilon^2
x_1+\varepsilon'\varepsilon^2 w_1(x), \cdots, \tau_n \varepsilon^2
x_n+\varepsilon'\varepsilon^2 w_n(x)).
\end{split}
\end{equation*}

\begin{proof}[Proof of Lemma \ref{lemma:schauder}] Let
 $$
\mu(\tau)=\inf\left\{ \sum_{i,j=1}^nS_{k}^{ij}(r(w)\xi_i\xi_j,
 \indent \forall x\in B_1(0), |\xi|=1, \|w\|_{C^{2,
\alpha}(B_1(0))}\le A\right\}.
$$
By Lemma \ref{lm:4.2}, $\mu(\tau)>0$. Applying Theorem 3.7 in \cite{GT} to the solution $u\in
C^0(\overline{B_1(0)})\cap C^2(B_1(0))$ of
$$
\left\{
\begin{array}{l}
L_{G}(w)u=\sum_{i,j=1}^{n}\frac{\partial S_{k}(r(w))}{\partial
r_{ij}}\partial_i\partial_ju+\sum_{i=1}^{n}a_{i}\partial_{i}u=g,\quad
\text {in}\ \ B_1(0),\\
 u=0,\indent \text{on}\ \  \partial B_1(0),\,
\end{array}\right.
$$
we have
\begin{equation}\label{eq:7.10}
\sup |u|\leq \frac{C}{\mu(\tau)}\|g\|_{C^0(\overline{B_1(0)})},
\end{equation}
where $C=e^{2(\beta+1)}-1$ and
$\beta=\sup\left\{\frac{|a_i|}{\mu(\tau)}: i=1,2,\ldots,n\right\}$.

Let $C_1=1-C\sup\frac{|a|}{\mu(\tau)}$ with $C$ being the constant
in \eqref{eq:7.10}. If we choose $\varepsilon_0>0$ small (
since $a=O(\varepsilon^4)$ is small), then $C_1>\frac{1}{2}$ is independent of
$0<\varepsilon<\varepsilon_0.$ By applying Corollary 3.8 in \cite{GT} to
the solution $\rho$ to Dirichlet problem \eqref{eq:7.9}, we have
\begin{equation}\label{eq:7.11}
\sup |\rho|\leq \frac{1}{C_1}\left[\sup_{\partial
B_1(0)}|\rho|+\frac{C}{\mu(\tau)}\|g\|_{C^0(\overline{B_1(0)})}\right]=
\frac{C}{C_1\mu(\tau)}\|g\|_{C^0(\overline{B_1(0)})}.
\end{equation}
From \eqref{eq:7.11} we see that the homogeneous problem
$$
\left\{
\begin{array}{l}
L_{G}(w)\rho=\sum_{i,j=1}^{n}\frac{\partial S_{k}(r(w))}{\partial
r_{ij}}\partial_i\partial_j\rho+\sum_{i=1}^{n}a_{i}\partial_{i}\rho+a\rho=0,\quad
\text {in}\ \ B_1(0),\\
 \rho=0\indent \text{on}\ \  \partial B_1(0)\,
\end{array}\right.
$$
only possesses the trivial solution. Then we can apply a Fredholm
alternative, Theorem 6.15 in \cite{GT}, to the inhomogeneous problem
\eqref{eq:7.9} for which we can assert that it has a unique $C^{2,
\alpha}(\overline{B_1(0)})$ solution for all $g\in
C^{\alpha}(\overline{B_1(0)}).$

With the existence and uniqueness at hand, we can apply Theorem 6.19
 \cite{GT} to obtain higher regularity up to boundary for solution
 to \eqref{eq:7.9}. Besides this, we have the  Schauder estimates  (see
  Problem 6.2 , \cite{GT})
\begin{equation}\label{eq:7.12}
\|\rho\|_{C^{2,\alpha}}\leq
C(A,\tau,\|f\|_{C^{1,1}})\left[\|\rho_{k}\|_{C^0(\overline{B_1(0)})}+
\|g_{k}\|_{C^{\alpha}(\overline{B_1(0)})}\right],
\end{equation}
where $C$ depends on $C^{\alpha}-$norm of all of the
coefficients; the uniform ellipticity; boundary value and boundary
itself. Now we explain the dependence of
$C(A,\tau,\|f\|_{C^{1,1}}).$ Firstly, since the first two
derivatives of $w$ have come  into the principal coefficients
$\frac{\partial S_{2}(r(w))}{\partial r_{ij}}$, then their
$C^{\alpha}$-norms must be involved in $\|w\|_{C^{2,\alpha}}$. That is, $\|w\|_{C^{2,\alpha}}\leq A$  arise into $C$. Similarly,
by the virtue of the coefficients $a_i$ and $a$,  we have that $\|f\|_{C^{1,1}}$
and $\|w\|_{C^{2,\alpha}}\leq A$ must arise into $C$.
  Secondly, it depends on the uniform ellipticity, that is,
$$
\inf\left\{ \sum_{i,j=1}^nS_{k}^{ij}(r(w)\xi_i\xi_j,
 \indent \forall x\in B_1(0), |\xi|=1, \|w\|_{C^{2,
\alpha}(B_1(0))}\le A\right\}
$$
and
$$
\sup\left\{ \sum_{i,j=1}^nS_{k}^{ij}(r(w)\xi_i\xi_j,
 \indent \forall x\in B_1(0), |\xi|=1, \|w\|_{C^{2,
\alpha}(B_1(0))}\le A\right\},
$$
so ($\tau=(\tau_1,\tau_2,\ldots,\tau_n)$) and $A$ arise into $C$.

Thirdly, Since its boundary value is zero and boundary $\partial B_1(0)$
is $C^\infty$, the these two ingredients do not occur into $C$.
  Substituting \eqref{eq:7.11} into \eqref{eq:7.12}, we obtain
  \eqref{eq:7.8}.
\end{proof}

It follows from the standard elliptic theory (see Theorem 6.17 in \cite{GT} and Remark 2 in \cite{CNS1}) and an iteration argument that we obtain.

\begin{corollary}\label{corollary1}
Assume that $u\in C^{2, \alpha}(\Omega)$ is a solution of \eqref{eq:1.2}, and the linearized operators with respect to $u$,
$$
\mathcal{L}_u=\sum_{i,j=1}^{n}\frac{\partial
S_{k}(u_{ij})}{\partial r_{ij}}\partial^2_{ij}
-\sum_{i=1}^{n}\frac{\partial f}{\partial
p_i}(y,u(y),Du(y))\partial_{i}-\frac{\partial f}{\partial
z}(y,u(y),Du(y)),
$$
is uniformly elliptic. If $f\in C^\infty$, then $u\in C^\infty(\Omega)$.
\end{corollary}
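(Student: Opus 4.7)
The plan is a standard bootstrap argument by differentiating the equation and iterating the classical interior Schauder estimate, exploiting the uniform ellipticity of $\mathcal{L}_u$.

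First, fix any index $l \in \{1,\dots,n\}$ and differentiate the equation $S_k[u]=f(y,u,Du)$ with respect to $y_l$. Using the chain rule and the identity $\partial S_k(M)/\partial M_{ij}=S_k^{ij}(M)$, one verifies that $v:=\partial_l u$ is a weak solution of the linear equation
\begin{equation*}
\mathcal{L}_u v \;=\; \partial_{y_l} f\bigl(y,u(y),Du(y)\bigr),
\end{equation*}
with the same $\mathcal{L}_u$ appearing in the statement. Under the starting hypothesis $u\in C^{2,\alpha}(\Omega)$, the principal coefficients $S_k^{ij}(D^2u)$ are polynomials in the entries of $D^2u$, hence belong to $C^\alpha_{\mathrm{loc}}(\Omega)$; the lower-order coefficients $\partial_{p_i}f$ and $\partial_z f$ evaluated along $(y,u(y),Du(y))$, and the right-hand side $\partial_{y_l}f(y,u(y),Du(y))$, are in $C^{1,\alpha}_{\mathrm{loc}}$ by smoothness of $f$ and the standard composition rule in H\"older spaces.

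Second, since $\mathcal{L}_u$ is uniformly elliptic by assumption, I apply the interior Schauder estimate (Theorem 6.2 in \cite{GT}) to the linear equation for $v$, which yields $v\in C^{2,\alpha}_{\mathrm{loc}}(\Omega)$ for every $l$, hence $u\in C^{3,\alpha}_{\mathrm{loc}}(\Omega)$.

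Third, I iterate the scheme. Assume by induction that $u\in C^{m,\alpha}_{\mathrm{loc}}(\Omega)$ for some $m\ge 2$. Then the principal coefficients $S_k^{ij}(D^2u)$ lie in $C^{m-2,\alpha}_{\mathrm{loc}}$; the coefficients $\partial_{p_i}f(y,u,Du)$ and $\partial_z f(y,u,Du)$, as well as the right-hand side $\partial_{y_l}f(y,u,Du)$, lie in $C^{m-1,\alpha}_{\mathrm{loc}}$, again by $f\in C^\infty$ and composition. Interior Schauder estimates then give $v\in C^{m,\alpha}_{\mathrm{loc}}$, so $u\in C^{m+1,\alpha}_{\mathrm{loc}}$. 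Letting $m\to\infty$ yields $u\in C^\infty(\Omega)$.

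The only real subtlety—rather than a genuine obstacle—is bookkeeping the regularity of the principal coefficients, which always lag two derivatives behind $u$; this is precisely why the induction advances by exactly one derivative per step, and is the reason the argument bootstraps successfully rather than stalling. All remaining ingredients (composition of $C^\infty$ nonlinearities with $C^{m,\alpha}$ data, and the interior Schauder estimate itself) are classical.
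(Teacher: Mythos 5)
Your approach — differentiate the equation, observe that $v=\partial_l u$ solves the linearized equation $\mathcal{L}_u v = \partial_{y_l}f$, apply interior Schauder estimates, iterate — is exactly the standard bootstrap that the paper is invoking via its citation to Theorem 6.17 in Gilbarg--Trudinger and to Remark 2 in Caffarelli--Nirenberg--Spruck; the paper gives no further details, so you are essentially spelling out the paper's intended argument.

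The one place you are being imprecise is the very first step of the bootstrap. From $u\in C^{2,\alpha}$ you only know $v=\partial_l u\in C^{1,\alpha}$; the quantity $\partial^2_{ij}v$ appearing in $\mathcal{L}_u v$ does not yet exist classically. You say $v$ is a ``weak solution'' and then invoke Theorem~6.2 of Gilbarg--Trudinger, but Theorem~6.2 is an \emph{a priori estimate} for functions already known to be in $C^{2,\alpha}$; it does not by itself upgrade a $C^{1,\alpha}$ weak solution to a $C^{2,\alpha}$ classical one. The correct way to carry out the first step is the difference-quotient argument applied to the original nonlinear equation: set $v_h(x)=\bigl(u(x+he_l)-u(x)\bigr)/h\in C^{2,\alpha}$, observe that $v_h$ satisfies a genuine linear uniformly elliptic equation with $C^\alpha$ coefficients (obtained by integrating $S_k^{ij}$ and $D_{z,p}f$ along the segment joining $D^2u(x)$ and $D^2u(x+he_l)$, etc.), apply Theorem~6.2 to $v_h$ to get a $C^{2,\alpha}$ bound uniform in $h$, and pass to the limit to conclude $\partial_l u\in C^{2,\alpha}_{\mathrm{loc}}$, i.e.\ $u\in C^{3,\alpha}_{\mathrm{loc}}$. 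This is precisely what the paper's citation to CNS Remark~2 is handling. Once $u\in C^{3,\alpha}_{\mathrm{loc}}$, then $v\in C^{2,\alpha}_{\mathrm{loc}}$ genuinely and your iteration via Theorem 6.17 of Gilbarg--Trudinger (for linear equations with $C^{m,\alpha}$ coefficients) proceeds exactly as you describe.
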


Using Lemma \ref{lemma:schauder} above, we can use the procedure \eqref{eq:7.6} to
construct the sequence $\{w_m\}_{m\in \mathbb{N}}$. Now we
study the convergence of $\{w_m\}_{m\in \mathbb{N}}$ and $\{g_m\}_{m\in \mathbb{N}}$.

\begin{proposition}\label{nms}
Let $\{w_m\}_{m\in \mathbb{N}}$ and $\{g_m\}_{m\in \mathbb{N}}$ be the
sequence in \eqref{eq:7.6}. Suppose that
$\|w_{j}\|_{C^{2,\alpha}}\leq A$  for $j=1,2,\ldots,l$. Then we have
\begin{equation}\label{eq:7.13}
\|g_{l+1}\|_{C^{\alpha}}\leq
C\|g_{l}\|^{2}_{C^{\alpha}} ,
\end{equation}
where $C$ is some positive constant depends only on $\tau$, $A$ and
 $ \|{f}\|_{C^{2,1}}.$
 In particular, $C$ is independent of $l.$
\end{proposition}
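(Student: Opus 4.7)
The core identity is that $g_{l+1}$ is precisely the quadratic Taylor remainder of $G$ at $w_l$ evaluated at the increment $\rho_l$. Indeed, since $w_{l+1}=w_l+\rho_l$ and $L_G(w_l)\rho_l=g_l=-G(w_l)$, a formal expansion gives
\begin{equation*}
g_{l+1}=-G(w_{l+1})=-G(w_l)-L_G(w_l)\rho_l-Q(w_l,\rho_l)=-Q(w_l,\rho_l),
\end{equation*}
where $Q$ collects all Taylor terms of order $\ge 2$ in $\rho_l$. Thus the whole proof reduces to estimating $\|Q(w_l,\rho_l)\|_{C^\alpha}$ by $\|\rho_l\|_{C^{2,\alpha}}^2$ with a constant depending only on $\tau$, $A$ and $\|f\|_{C^{2,1}}$, and then invoking the Schauder estimate \eqref{eq:7.8} from Lemma \ref{lemma:schauder}.

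I would split $Q=Q_S-Q_f$ according to the decomposition $\varepsilon' G(w)=S_k(r(w))-\tilde f_\varepsilon(x,w,Dw)$. For the $S_k$ part, note that $S_k$ is a polynomial of degree $k$ in the entries of $r$, and $r(w_l+\rho_l)-r(w_l)=\varepsilon'(\partial_{ij}\rho_l)$. A finite Taylor expansion at $r(w_l)$ therefore produces
\begin{equation*}
Q_S=\frac{1}{\varepsilon'}\sum_{m=2}^{k}\frac{(\varepsilon')^m}{m!}\,D^m S_k\bigl(r(w_l)\bigr)\bigl[(\partial^2\rho_l)^{\otimes m}\bigr],
\end{equation*}
whose $C^\alpha$ norm is controlled by $\varepsilon'\,C(A,\tau)\|\rho_l\|_{C^{2,\alpha}}^2$ after using the algebra property of $C^\alpha$ and the bound $\|w_l\|_{C^{2,\alpha}}\le A$ (which makes the coefficients $D^m S_k(r(w_l))$ uniformly bounded in $C^\alpha$). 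For the $\tilde f_\varepsilon$ part, I would use the $C^{2,1}_{u,p}$ regularity of $f$: writing the arguments schematically as $u=\varepsilon^4\psi+\varepsilon'\varepsilon^4 w_l$ and $p=\tau\varepsilon^2 x+\varepsilon'\varepsilon^2 Dw_l$, and their increments as $\varepsilon'\varepsilon^4\rho_l$ and $\varepsilon'\varepsilon^2 D\rho_l$, the second-order Taylor remainder of $f$ is bounded by
\begin{equation*}
C\|f\|_{C^{2,1}}\bigl((\varepsilon'\varepsilon^4 \rho_l)^2+(\varepsilon'\varepsilon^2 D\rho_l)^2\bigr),
\end{equation*}
and $C^\alpha$ regularity of the remainder follows from the Lipschitz character of $D^2_{u,p}f$. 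After dividing by $\varepsilon'$, this contributes at most $\varepsilon'\,C(A,\|f\|_{C^{2,1}})\|\rho_l\|_{C^{1,\alpha}}^2$.

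Adding the two estimates yields
\begin{equation*}
\|g_{l+1}\|_{C^\alpha}=\|Q(w_l,\rho_l)\|_{C^\alpha}\le C(\tau,A,\|f\|_{C^{2,1}})\,\|\rho_l\|_{C^{2,\alpha}}^2,
\end{equation*}
and then Lemma \ref{lemma:schauder} bounds $\|\rho_l\|_{C^{2,\alpha}}$ by $\|g_l\|_{C^\alpha}$ with a constant of the same type, giving \eqref{eq:7.13}. The main technical care needed is twofold: first, ensuring that the Hölder estimates on $Q_S$ really do bring out the factor $\varepsilon'$ uniformly in $l$ (this uses $\|w_l\|_{C^{2,\alpha}}\le A$ to keep each $\|D^mS_k(r(w_l))\|_{C^\alpha}$ bounded independently of $l$); second, verifying that the $C^\alpha$ remainder of the Taylor expansion of $f$ in $(u,p)$ is actually controllable, for which $C^{2,1}_{u,p}$ (not merely $C^2_{u,p}$) is essential, as already noted in Remark following Theorem \ref{th:Main1}. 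The remaining computations—verifying the algebra of $C^\alpha$ and tracking powers of $\varepsilon$ and $\varepsilon'$—are routine once the Taylor structure above is set up.
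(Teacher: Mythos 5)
Your proposal matches the paper's proof in essence: both identify $g_{l+1}=-Q(w_l,\rho_l)$ as the quadratic Taylor remainder of $G$ at $w_l$, split $Q$ into an $S_k$ part and an $\tilde f_\varepsilon$ part, control each in $C^\alpha$ using the polynomial structure of $S_k$ and the $C^{2,1}_{u,p}$ regularity of $f$, and close with the Schauder bound from Lemma~\ref{lemma:schauder}. One step you leave implicit should be stated explicitly: your finite Taylor expansion of $S_k$ produces, for $2\le m\le k$, contributions of size $(\varepsilon')^{m-1}\|\rho_l\|_{C^{2,\alpha}}^{m}$, and folding the $m\ge 3$ terms into $\varepsilon'\,C\|\rho_l\|_{C^{2,\alpha}}^{2}$ requires an a priori bound on $\|\rho_l\|_{C^{2,\alpha}}$; this follows, as the paper makes explicit when reducing the estimate $\|g_{l+1}\|_{C^\alpha}\le C\sum_{j=1}^{k-1}\|\rho_l\|_{C^{2,\alpha}}^{j+1}+C\|\rho_l\|_{C^{2,\alpha}}^{3}$ to a pure quadratic, from the Schauder estimate combined with the bound $\|g_l\|_{C^\alpha}=\|G(w_l)\|_{C^\alpha}\le C(A)$ that the hypothesis $\|w_l\|_{C^{2,\alpha}}\le A$ supplies.
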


\begin{proof}
By applying Taylor expansion with integral-typed remainder to
\eqref{eq:7.3}, we have
\begin{align*}
-g_{l+1}&=G(w_{l}+\rho_{l})=G(w_{l})+L_{G}(w_{l})\rho_{l}+Q(w_{l},\rho_{l})\\
&=-g_l+L_{G}(w_{l})\rho_{l}+Q(w_{l},\rho_{l})=Q(w_{l},\rho_{l}),
\end{align*}
where $Q$ is the quadratic error of $G$ which consists of $S_{k}$ and $f$,
\begin{equation}\label{eq:7.15}
\begin{split}
Q(w_{l},\rho_{l})=&\sum_{ij,st}\frac{1}{\varepsilon}\int
(1-\mu)\frac{\partial^{2}S_{k}(w_l+\mu\rho_l)}{\partial
w_{ij}\partial w_{st}}d\mu\,(\rho_{l})_{ij}\,(\rho_{l})_{st}\\
&-\sum_{i,j}\frac{1}{\varepsilon}\int
(1-\mu)\frac{\partial^{2}\tilde
f_\varepsilon(w_l+\mu\rho_l)}{\partial
w_{i}\partial w_{j}}d\mu\,(\rho_{l})_{i}\,(\rho_{l})_{j}\\
&-\frac{1}{\varepsilon}\sum_{i}\int (1-\mu)\frac{\partial^{2}\tilde
f_\varepsilon(w_l+\mu\rho_l)}{\partial
w\partial w_{i}}d\mu(\rho_{l})_{i}(\rho_l)\\
&-\frac{1}{\varepsilon}\int(1-\mu)\frac{\partial^{2}\tilde
f_\varepsilon(w_l+\mu\rho_l)}{\partial w^{2}}d\mu\cdot
\rho^{2}_l\\
&=I_1+I_2+I_3+I_4
\end{split}
\end{equation}

Since $S_k((r(w)))$ is a $k$-order homogeneous polynomial with
variable $r_{ij}(r(w))$ and $\tilde{f_\varepsilon}(x,w,Dw)$ is independent
of $r_{ij},$ we see that
 \begin{equation*}
\begin{split}
\left|\frac{\partial^{2}S_{k}(w_l+\mu\rho_l)}{\partial
w_{ij}\partial w_{st}}\right|&=\varepsilon'^2\left|\frac{\partial^{2}S_{k}}{\partial
w_{ij}\partial{w_{st}}}(\delta^{j}_{i}\tau_{i}+\varepsilon'
(w_l+\mu\rho_l)_{ij})\right|\\
&=\varepsilon'^2\sum_{j=2}^kC(j,\tau)[\partial^2(w_l+\mu\rho_l)]^{j-2}
\end{split}
\end{equation*}

\begin{equation*}
\begin{split}
\left|\frac{\partial^{2}\tilde f_\varepsilon(w_l+\mu\rho_l)}{\partial
w_{i}\partial w_{j}}\right|&=\left|\frac{\partial^{2}[f(\varepsilon
x,\varepsilon^{4}\psi+\varepsilon'\varepsilon^{4}(w_l+\mu\rho_l),\varepsilon^{2}D\psi+
\varepsilon'\varepsilon^{2}D(w_l+\mu\rho_l))]}{\partial
w_{i}\partial w_{j}}\right|\\
&\leq \varepsilon'^2\varepsilon^{4}\cdot
\|f\|_{C^{1,1}},
\end{split}
\end{equation*}
\begin{equation*}
\begin{split}
\left|\frac{\partial^{2}\tilde f_\varepsilon(w_l+\mu\rho_l)}{\partial
w\partial w_{i}}\right|&=\left|\frac{\partial^{2}[f(\varepsilon
x,\varepsilon^{4}\psi+\varepsilon'\varepsilon^{4}(w_l+\mu\rho_l),\varepsilon^{2}
D\psi+\varepsilon'\varepsilon^{2}D(w_l+\mu\rho_l))]}{\partial
w\partial w_{i}}\right|\\
&\leq \varepsilon'^2\varepsilon^{6}\|f\|_{C^{1,1}},
\end{split}
\end{equation*}
\begin{equation*}
\begin{split}
\left|\frac{\partial^{2}\tilde f_\varepsilon(w_l+\mu\rho_l)}{\partial w^{2}}\right|&=
\left|\frac{\partial^{2}[f(\varepsilon
x,\varepsilon^{4}\psi+\varepsilon'\varepsilon^{4}(w_l+\mu\rho_l),\varepsilon^{2}
D\psi+\varepsilon'\varepsilon^{2}D(w_l+\mu\rho_l))]}{\partial
w^{2}}\right|\\
&=\varepsilon'^2\varepsilon^{8}\|f\|_{C^{1,1}}.
\end{split}
\end{equation*}
Thus, $I_i (1\leq i\leq 4)$ in $Q$ are under control by $O(\varepsilon')$,
$O(\varepsilon'\varepsilon^{4})$, $O(\varepsilon'\varepsilon^{6})$ and $O(\varepsilon'\varepsilon^{8})$
respectively. Therefore
$$
\|I_1\|_{C^{\alpha}}\leq C\sum_{j=1}^{k-1}\|\rho_{l}\|_{C^2}^j\|\rho_{l}\|_{C^{2,\alpha}}
$$
and
\begin{equation*}
\begin{split}
\|I_2\|_{C^{\alpha}}
\leq&C\|f\|_{C^{\alpha}}(\|w_l\|_{C^{1,\alpha}}+\|\rho_l\|_{C^{1,\alpha}})\|\rho_l\|^2_{C^{1}}+
C\|f\|_{C^{1,1}}\|\rho_l\|_{C^{2,\alpha}}\|\rho_l\|_{C^{1}} \\
\leq &
C\|\rho_l\|_{C^{2,\alpha}}\|\rho_l\|^2_{C^{1}}+C\|\rho_l\|^2_{C^{1}}
+C\|\rho_l\|_{C^{2,\alpha}}\|\rho_l\|_{C^{1}}
\end{split}
\end{equation*}
holds, where $C$ depends on $A$ and $\|f\|_{C^{2, \alpha}}$.
 And $\|I_3\|_{C^{\alpha}}$  and $\|I_4\|_{C^{\alpha}}$ can be
estimated similarly.   Accordingly,
\begin{equation*}
\begin{split}
\|g_{l+1}\|_{C^{\alpha}}&= \|Q(w_{l},\rho_{l})\|_{C^{\alpha}}
\leq \sum_{i=1}^4\|I_i\|_{C^{\alpha}}\\
\leq &
C\sum_{j=1}^{k-1}\|\rho_{l}\|_{C^2}^j\|\rho_{l}\|_{C^{2,\alpha}}+
C\|\rho_l\|_{C^{2,\alpha}}\|\rho_l\|^2_{C^{1}}+
\|\rho_l\|^2_{C^{1}}+C\|\rho_l\|_{C^{2,\alpha}}\|\rho_l\|_{C^{1}},
\end{split}
\end{equation*}
where $C$ is independent of $l$ but dependent of $A$ and
$\|f\|_{C^{2,1}}$. Thus, by the interpolation inequalities,  we
have
$$
\|g_{l+1}\|_{C^{\alpha}}\leq  C\sum_{j=1}^{k-1}\|\rho_{l}\|^{j+1}_{C^{2,\alpha}}+
C\|\rho_{l}\|^3_{C^{2,\alpha}},
$$
where $C$ is independent of $l$. By the Schauder estimates of Lemma
\ref{lemma:schauder}, we have
$$
\|\rho_{l}\|_{C^{2,\alpha}}\leq
C \|g_{l}\|_{C^{\alpha}}.
$$
Recall that $\|g_{l}\|_{C^{\alpha}}=\|G(w_m)\|_{C^{\alpha}}\leq C(A)$ holds provided $\|w_{j}\|_{C^{2,\alpha}}\leq A$  for $j=1,2,\ldots,l$. Combining the two estimates above, we obtain \eqref{eq:7.13}.

For the special case $f=f(y)=f(\varepsilon^2x)$, then $I_2=I_3=I_3=0$ in \eqref{eq:7.15}. So the condition $f=f(y)\in C^\alpha$ is enough for  the estimate
\eqref{eq:7.20}-\eqref{eq:7.16} below . Proof is done.
\end{proof}
Since $C$ is independent of $l$, more exactly, $A$, $\tau$ and
$\|f\|_{C^{2,1}}$  are independent of $l$. So hereafter, we can assume $A=1.$

\begin{proof}[\bf Proof of Theorem \ref{th:M1}] Set
\begin{equation}\label{eq:7.14}
d_{l+1}=C\|g_{l+1}\|_{C^{2,\alpha}}, \ \  l=0,1,2,\ldots..
\end{equation}
By \eqref{eq:7.13} and setting $C\geq 1$ we have
$$
d_{l+1}\leq d_{l}^{2}.
$$
Take $\tau\in\mathbf{P}_{2}$  such that  $\sigma_k(\tau)=f(0, 0, 0)$,
we have
\begin{align*}
& g_0(x)=-G(0)=\frac{1}{\varepsilon'}[S_k(r(0))-\tilde{f}(x, 0, 0)]\\
=&\frac{1}{\varepsilon'}\left[\sigma_k(\tau)-
f\big(\varepsilon^2 x, \varepsilon^4
\psi(x), \varepsilon^2(\tau_1 x_1, \ldots,\tau_n x_n)\big)\right]\\
=&\frac{1}{\varepsilon'}\left[(\sigma_k(\tau)-
f\big(0,0,
0)\right]+\frac{1}{\varepsilon}\left[f(0, 0, 0)-
f(\varepsilon^2 x, 0, 0)\right]\\
&+\frac{1}{\varepsilon'}\left[f(\varepsilon^2 x, 0, 0)-
f\big(\varepsilon^2 x, \varepsilon^4
\psi(x), \varepsilon^2(\tau_1 x_1, \ldots,\tau_n x_n)\big)\right]\\
=&\frac{1}{\varepsilon'}\left[f(0, 0, 0)-
f(\varepsilon^2 x, 0, 0)\right]\\
-&\frac{\varepsilon^4}{\varepsilon'} \int^1_0
\psi(x)(\partial_zf)\Big(\varepsilon^2 x, t\varepsilon^4
\psi(x), t\varepsilon^2(\tau_1 x_1,\ldots,\tau_n x_n)\Big) dt\\
-&\frac{\varepsilon^2}{\varepsilon'} \int^1_0 (\tau_1 x_1,\ldots,\tau_n x_n)\,\cdot\,(\partial_pf)\Big(\varepsilon^2 x, t\varepsilon^4
\psi(x), t\varepsilon^2(\tau_1 x_1,\ldots,\tau_n x_n)\Big) dt,
\end{align*}
where $\sigma_k(\tau)-f\big(0,0,0)=0$ is used. Noticing
$$\frac{1}{\varepsilon'}\|f(0, 0, 0)-
f(\varepsilon^2 x, 0, 0)\|_{C^{ \alpha}(B_1(0))}\leq C\frac{\varepsilon^{2\alpha}}{\varepsilon'}
\|f(\cdot,0,0)\|_{C^\alpha(B_{\varepsilon^2}(0))},
$$
we obtain
\begin{equation}\label{eq:7.20}
 \|g_0\|_{C^{ \alpha}(B_1(0))}\le C_1\frac{\varepsilon^{2\alpha}}{\varepsilon'}  \|f\|_{C^{1,1}}.
\end{equation}
Using the definition of $\varepsilon'$ in \eqref{eq:1.5BB}, we can choose $0<\varepsilon\leq \varepsilon_0$ so small that
\begin{equation}\label{eq:7.16}
C\|g_{0}\|_{C^{\alpha}(B_1(0))}\leq \frac{1}{4}, \indent 0<\varepsilon\leq
\varepsilon_0.
\end{equation}

Notice $\varepsilon_0$ is independent of $l.$ Since
$d_{0}=C\|g_{0}\|_{C^{\alpha}}$, we have $d_1\leq d_0^2$. Then, by an
induction,

 $$
d_{l+1}\leq
2^{2^{l+1}}d_0^{2^{l+1}}\leq(2C)^{2^{l+1}}\|g_{0}\|^{2^{l+1}}_{C^{\alpha}}.
$$

Thus, by \eqref{eq:7.14} and \eqref{eq:7.16}
\begin{equation}\label{eq:7.17}
\|g_{l+1}\|_{C^{\alpha}}\leq (2C)^{2^{l+1}-1}\|g_{0}\|^{2^{l+1}}_{C^{\alpha}}\leq \left(\frac 12\right)^{2^l}\,\,\rightarrow\,\, 0,.
\end{equation}
Firstly, we claim that there exists a constant $\varepsilon_0>0,$
depending on $\tau$ and $\|f\|_{C^{2,1}}$ such that, uniformly for $\varepsilon\in (0,\varepsilon_0],$
$$
\|w_{l}\|_{C^{2,\alpha}(B_1(0))}\leq 1,\ \ \forall l\geq 1.
$$
Indeed, set $w_0=0,$  we have by \eqref{eq:7.13}
\begin{align*}
\|w_{l+1}\|_{C^{2,\alpha}(B_1(0))}&=\|\sum_{i=0}^{l}\rho_{i}\|_{C^{2,\alpha}(B_1(0))}
\leq\sum_{i=0}^{l}\|\rho_{i}\|_{C^{2,\alpha}(B_1(0))}\\
&\leq \sum_{i=0}^{l}C\|g_{i}\|_{C^{\alpha}(B_1(0))}\leq
\sum_{i=0}^{l}\Big(C\|g_{0}\|_{C^{\alpha}(B_1(0))}\Big)^{2^{i}}
\end{align*}
where $C$ is defined in Lemma \ref{nms}.  Thus, for any $l$,
\begin{equation}\label{eq:7.18}
 \|w_{l+1}\|_{C^{2,\alpha}(B_1(0))}\leq
\sum_{i=0}^{\infty}\Big(C\|g_{0}\|_{C^{\alpha}(B_1(0))}\Big)^{2^{i}}\leq
\sum_{i=0}^{\infty}2^{-2^{i}}\leq 1.
\end{equation}
Then, by Azel\`{a}-Ascoli Theorem, there is a subsequence of $w_{l}$, still denoted by $w_{l}$,
such that
$$
w_{l}\rightarrow w\indent \textup{in}~C^{2}(B_1(0)),
$$
and $w\in C^{2, \alpha}(B_1(0))$.
From \eqref{eq:7.17} and  $g_{m}=-G(w_{m})$, we have
$$
G(w)=\frac{1}{\varepsilon'}[S_k(r(w))-\tilde{f}(x, w, Dw)]=0,\qquad \mbox{on}\,\,\,
B_1(0).
$$
That means to say the function
$$
u(y)=\frac{1}{2}\sum_{i=1}^n\tau_i
y_i^2+\varepsilon' \varepsilon^{4}w(\varepsilon^{-2}y)\in C^{2, \alpha}(B_{\varepsilon^2}(0)),
$$
is a solution of
$$
S_k[u]=f(y, u, Du),\qquad \mbox{on}\,\,\,B_{\varepsilon^2}(0)\, .
$$

Now if $f(0, 0, 0)=0$, we take $\tau\in\mathbf{P}_2$, then $\sigma_{k-1}(\tau)>0,
\sigma_k(\tau)=0, \sigma_{k+1}(\tau)<0$. Noticing that symmetric matrix $r(w)=(\delta_{i}^{j}\tau_i+\varepsilon' w_{ij}(x)),$ we have
$$
S_j[u]=\sigma_j(\lambda)=\sigma_j(\tau)+O(\varepsilon'),\qquad j=1, 2,\ldots, k+1.
$$
it follows that $S_j[u]>0 (1\leq j\leq k-1), S_{k+1}[u]<0$ on $B_{\varepsilon^2}(0)$ for small $\varepsilon>0$. That is, $u$ is $(k-1)-$convex but not $(k+1)-$convex. Moreover if $S_k[u]=f\geq 0$ near $Z_0$ and
$f(Z_0)=0$, we see that $u$ is $k$-convex by definition, but not $(k+1)$-convex.

If $S_k[u]=f>0$ near $Z_0$, we use Theorem \ref{4th} to take $\tau\in \mathbb{R}^n$ given in $\Gamma_{k-l+1}(n)\setminus\overline\Gamma_{k+l}(n)$ for $1\leq l\leq n-k$, then we can get the $(k+l-1)$-convex but not $(k+l)$-convex local solutions.
From \eqref{eq:7.18} and condition $Z_0=(0,0,0)$ , we obtain  \eqref{eq:7.2}.

The $C^\infty$ regularity of solution is given by Corollary \ref{corollary1}. Thus, we have proved Theorem \ref{th:M1}.
\end{proof}

We also have the following elliptic results for $f(Z_0)<0$ which is (3) in the Theorem \ref{th:Main1}.

\begin{theorem}\label{th:M2}
For $2\leq k\leq n-1,$ let $f=f(y,u,p)$ be defined and continuous near a point $Z_0=(0, 0, 0)\in \mathbb{R}^n\times\mathbb{R}\times\mathbb{R}^{n}$ and $0<\alpha<1$. Assume that $f$ is $C^\alpha$ with
respect to $y$ and  $C^{2,1}$ with respect to $u, p$. If  $f(Z_0)<0$, then \eqref{eq:1.2} admits  a $(k-1)$-convex local solution $C^{2, \alpha}$ near $y_0=0$,
which is not $k$-convex and of  the following form
\begin{equation*}
u(y)=\frac{1}{2}\sum^{n}_{i=1}\tau_{i}y^{2}_{i}+\varepsilon'\varepsilon^{5}w(\varepsilon^{-2}
y)
\end{equation*}
with $\sigma_k(\tau_1,\ldots,\tau_n)=f(0,0,0)$, $\varepsilon>0$ very small,
$\varepsilon'$ is defined in \eqref{eq:1.5BB} and $w$ satisfies \eqref{eq:7.2}.
Moreover, the equation \eqref{eq:1.2} is uniformly elliptic with respect to the solution
above.  If $f\in C^\infty$ near $Z_0$, then $u\in C^\infty$ near $y_0$.
\end{theorem}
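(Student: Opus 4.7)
The proof will follow the Nash--Moser scheme of Theorem \ref{th:M1}, with three adaptations to the sign condition $f(Z_0)<0$. The base polynomial $\psi(y)=\tfrac12\sum\tau_iy_i^2$ is chosen via Theorem \ref{3th} applied with $c=f(0,0,0)<0$: this produces $\tau=(\tau_1,\dots,\tau_n)\in\Gamma_{k-1}(n)$ satisfying $\sigma_k(\tau)=f(0,0,0)$ and the uniform positivity $0<\sigma_{k-1;1}(\tau)\le\cdots\le\sigma_{k-1;n}(\tau)$. Inserting the ansatz $u(y)=\psi(y)+\varepsilon'\varepsilon^5 w(\varepsilon^{-2}y)$ and setting $x=\varepsilon^{-2}y$, the Hessian decomposes as $D^2_yu=\mathrm{diag}(\tau)+\varepsilon'\varepsilon\,w_{ij}(x)$, and \eqref{eq:1.2} rewrites as $S_k(r(w))=\tilde f_\varepsilon(x,w,Dw)$ on $B_1(0)$. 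The additional factor of $\varepsilon$ in the Hessian perturbation (vs.\ the $\varepsilon'$ perturbation in Theorem \ref{th:M1}) is what simultaneously keeps $S_k[u]$ strictly negative throughout $B_{\varepsilon^2}(0)$ and furnishes the extra smallness needed to close the Nash--Moser quadratic error.

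Next, I would run the iteration exactly as in Section \ref{section3}: define $G(w)=\tfrac{1}{\varepsilon'}[S_k(r(w))-\tilde f_\varepsilon(x,w,Dw)]$ and iterate $w_{m+1}=w_m+\rho_m$, $L_G(w_m)\rho_m=-G(w_m)$ with zero boundary data on $\partial B_1(0)$. Writing $\tilde L_G=\varepsilon^{-1}L_G$, Theorem \ref{3th} guarantees that the principal part of $\tilde L_G(w)$ has diagonal entries $\sigma_{k-1;i}(\tau)+O(\varepsilon'\varepsilon)>0$ and off-diagonals $O(\varepsilon'\varepsilon)$, so it is diagonally dominant and uniformly elliptic with constants independent of $\varepsilon\in(0,\varepsilon_0]$. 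Since the lower-order coefficients satisfy $a_i=O(\varepsilon^3)$ and $a=O(\varepsilon^5)$, the analogues of Lemma \ref{lm:4.2} and Lemma \ref{lemma:schauder} go through and yield a uniform $C^{2,\alpha}$ Schauder estimate for the linearized problem. The counterpart of Proposition \ref{nms} then produces $\|g_{l+1}\|_{C^\alpha}\le C\|g_l\|_{C^\alpha}^2$; the cancellation $\sigma_k(\tau)=f(0,0,0)$ together with $f\in C^\alpha_x\cap C^{2,1}_{u,p}$ gives, exactly as in the proof of Theorem \ref{th:M1}, a bound $\|g_0\|_{C^\alpha}\to 0$ as $\varepsilon\to 0^+$, by the definition of $\varepsilon'$ in \eqref{eq:1.5BB}. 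Hence $\{w_m\}$ converges in $C^{2,\alpha}(B_1(0))$ to a solution $w$ of $G(w)=0$ with $\|w\|_{C^{2,\alpha}}\le 1$ and $w(0)=0$, $\nabla w(0)=0$.

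It remains to check the qualitative properties of $u(y)=\psi(y)+\varepsilon'\varepsilon^5 w(\varepsilon^{-2}y)\in C^{2,\alpha}(B_{\varepsilon^2}(0))$. Since $S_k[u]=f(\,\cdot\,,u,Du)<0$ near $y_0=0$ by continuity of $f$, the Hessian eigenvalues satisfy $\sigma_k(\lambda(D^2u))<0$, so $u$ is not $k$-convex. On the other hand, $\lambda(D^2u)=\tau+O(\varepsilon'\varepsilon)$ together with $\tau\in\Gamma_{k-1}(n)$ gives $\sigma_j(\lambda(D^2u))>0$ for $1\le j\le k-1$, so $u$ is $(k-1)$-convex. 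Uniform ellipticity of the linearized operator at $u$ follows from $\sigma_{k-1;i}(\tau)>0$, and Corollary \ref{corollary1} upgrades $u$ to $C^\infty$ whenever $f\in C^\infty$. The main technical obstacle is the careful bookkeeping of the small parameters $\varepsilon$, $\varepsilon'$: the $1/\varepsilon$ loss in the Schauder estimate for $\tilde L_G$ must be absorbed both by the smallness of $\|g_0\|_{C^\alpha}$ and by the extra powers of $\varepsilon'$, $\varepsilon$ that appear in the second derivatives of $S_k(r(w))$ and $\tilde f_\varepsilon$; the exponent $5$ (rather than $4$) in the ansatz is dictated precisely by this balance in both regimes of \eqref{eq:1.5BB}.
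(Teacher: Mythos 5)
Your proposal follows the paper's proof in outline: choose $\tau\in\Gamma_{k-1}(n)$ via Theorem \ref{3th} with $c=f(Z_0)<0$ so that $\sigma_k(\tau)=f(0,0,0)$ and $0<\sigma_{k-1;1}(\tau)\le\cdots\le\sigma_{k-1;n}(\tau)$, then run the Nash--Moser scheme of Section \ref{section3}. This is exactly what the paper does; its proof is one sentence invoking Theorem \ref{3th} followed by ``Now the proof is exactly the same as that of Theorem \ref{th:M1}.''

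Where you go astray is in trying to justify the exponent $\varepsilon^{5}$ in the ansatz. That exponent is a misprint in the paper's statement: Remark 1 after Theorem \ref{th:Main1} says the local solution has the form \eqref{eq:1.5}, with exponent $\varepsilon^{4}$, ``in all the case above'' including case (3), and ``exactly the same as Theorem \ref{th:M1}'' forces the $\varepsilon^{4}$ ansatz. Your heuristic --- that the extra $\varepsilon$ ``keeps $S_k[u]$ strictly negative and furnishes the extra smallness needed to close the Nash--Moser quadratic error'' --- is wrong on both counts. First, the $\varepsilon^{4}$ ansatz already yields $S_k[u]=\sigma_k(\tau)+O(\varepsilon')<0$ for small $\varepsilon$ (and in any case $S_k[u]=f<0$ near $y_0$ by continuity once $u$ solves the equation). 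Second, and decisively, the extra $\varepsilon$ \emph{breaks} the iteration rather than closing it. With $u=\psi+\varepsilon'\varepsilon^{5}w(\varepsilon^{-2}y)$ the Hessian perturbation is $\varepsilon'\varepsilon\,w_{ij}(x)$, so the principal part of $L_G(w)$ (with $G=\frac{1}{\varepsilon'}[S_k(r(w))-\tilde f_\varepsilon]$) is $\varepsilon\,S_k^{ij}(r(w))\partial^2_{ij}$, degenerating as $\varepsilon\to 0$; the Schauder estimate of Lemma \ref{lemma:schauder} then costs $\|\rho\|_{C^{2,\alpha}}\lesssim\varepsilon^{-1}\|g\|_{C^\alpha}$. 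The quadratic recursion $\|g_{l+1}\|_{C^\alpha}\le C\|g_l\|_{C^\alpha}^2$ does survive (the $(\varepsilon'\varepsilon)^2$ produced by $\partial^2 S_k/\partial w_{ij}\partial w_{st}$ absorbs the $\varepsilon^{-2}$), so your paragraph on $g_{l+1}$ is fine. But the crucial bound \eqref{eq:7.18}, $\|w_{l+1}\|_{C^{2,\alpha}}\le 1$, fails: one gets
$$
\|w_{l+1}\|_{C^{2,\alpha}}\le\sum_{i}\|\rho_i\|_{C^{2,\alpha}}
\lesssim \varepsilon^{-1}\|g_0\|_{C^\alpha}
\lesssim \frac{\varepsilon^{2\alpha-1}}{\varepsilon'},
$$
which blows up as $\varepsilon\to 0^{+}$ for both choices of $\varepsilon'$ in \eqref{eq:1.5BB} (it is $\varepsilon^{\alpha-1}$ when $\varepsilon'=\varepsilon^\alpha$ and $\varepsilon^{2\alpha-2}$ when $\varepsilon'=\varepsilon$, both with negative exponent since $\alpha<1$). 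Once the iterates escape the ball $\|w\|_{C^{2,\alpha}}\le 1$, the ellipticity and Schauder constants of Lemmas \ref{lm:4.2} and \ref{lemma:schauder} are no longer controlled and the whole scheme collapses. You should use the $\varepsilon^{4}$ ansatz, for which $r(w)=\delta_i^j\tau_i+\varepsilon' w_{ij}$, the linearized operator is uniformly elliptic with constants independent of $\varepsilon$, and the argument of Section \ref{section3} carries over verbatim; the convexity conclusions you state at the end are then correct.
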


\begin{proof}
For $f(0, 0, 0)<0$, take $\tau\in\mathbb{R}^n$ as in Theorem \ref{3th} with $c=f(0, 0, 0)<0$ such that
$$
\sigma_{k-1}(\tau)>0,\quad \sigma_k(\tau)=f(0, 0, 0)<0,
$$
and
$$
\sigma_{k-1;n}(\tau)\geq\sigma_{k-1; n-1}(\tau)\geq\ldots\sigma_{k-1, 1}(\tau)>0.
$$
Now the proof is exactly the same as that of Theorem \ref{th:M1}.
\end{proof}


\section{Appendix}\label{section4}

The following results are  essential in the proof of our main theorem, maybe it is classical, but we can't find
a simple proof, so present here as an appendix.

\subsection{The Equivalence of Three Definitions for G{\aa}rding Cone}

The G{\aa}rding cone is originated from the G{\aa}rding theory of
hyperbolic polynomials \cite{Ga}. Each hyperbolic polynomial is
essentially real. A homogeneous polynomial of $k$-order on a
$n-$dimensional real vector space $V$ is hyperbolic with respect to
a direction $a\in V$ if the  equation $P(sa+\lambda)=0$ with
one-variable $s$ has $k$ real zeros for every real $\lambda\in V.$
Using a convenient solecism, we say $P(\lambda)$ is an $a-$
hyperbolic polynomial. For an $a-$ hyperbolic polynomial
$P(\lambda)$ and $m>1,$ then by Rolle's theorem, its directional
derivative in the direction $a,$
\begin{equation}\label{eq:2.1}
(\nabla P(\lambda),a)=\frac{d}{ds}P(sa+\lambda)\mid_{s=0}
\end{equation}
is also an $a-$ hyperbolic polynomial, see Lemma 1 in \cite{Ga}. For
an $a-$ hyperbolic polynomial $P(\lambda)$ with $P(a)>0,$ its
G{\aa}rding cone is defined by
\begin{equation}\label{eq:2.2}
\mathscr{C}(a, P, n)=\left\{\lambda\in \mathbb{R}^n: P(sa+\lambda)>0,\forall s\geq 0\right\}.
\end{equation}
G{\aa}rding \cite{Ga} has proved that $
\mathscr{C}(a, P, n)=\mathscr{C}(b, P, n)$ is an open convex cone, for any $ b\in \mathscr{C}(a, P, n) $
with vertex at the origin, and
\begin{equation}\label{eq:2.4}
(\nabla P(\lambda),\mu)\geq
kP^{\frac{k-1}{k}}(\lambda)P^{\frac{1}{k}}(\mu), \ \ \ \forall
\lambda,\mu\in \mathscr{C}(a, P, n),
\end{equation}
see (11) in \cite{Ga}, which is the simplest version of the general
G{\aa}rding inequality, Theorem 5 in \cite{Ga} . We will apply the
results above to  the $k$-th elementary symmetric polynomial
$$
\sigma_{k}(\lambda)=\sum_{1\leq i_{1}<\cdots i_{k}\le
n}\lambda_{i_{1}}\cdots\lambda_{i_{k}},
$$
which is hyperbolic with respect to
$$
e=(1,1,\ldots,1)\in \mathbb{R}^n.
$$
G{\aa}rding cone is equivalently defined in the form which is easier to be verified than \eqref{eq:2.2}
\begin{equation}\label{eq:2.6}
\Gamma_{k}(n)=\{\lambda\in\mathbb{R}^{n}|\sigma_{j}(\lambda)>0,\forall j=1,2,\cdots,k\},
\end{equation}
From this definition, it follows  that
$$
\overline{\Gamma}_{n}(n)\subset\cdots\subset\overline{\Gamma}_{k}(n)
\subset\cdots\subset\overline{\Gamma}_{1}(n).
$$
Notice that Maclaurin's inequalities
\begin{equation}\label{eq:2.7}
\left[\frac{1}{(^{n}_{k})}\sigma_{k}(\lambda)
\right]^{\frac{1}{k}}\leq
\left[\frac{1}{(^{n}_{l})}\sigma_{l}(\lambda) \right]^{\frac{1}{l}}
\end{equation}
hold for $1\leq l\leq k, \lambda\in \Gamma_{k}(n)$; (see Lemma 15.12 in \cite{Li}).
Denoting
$$
\lambda+\varepsilon
=(\lambda_1+\varepsilon,\lambda_2+\varepsilon,\ldots,\lambda_n+\varepsilon)$$
and applying the formula
$$
\sigma_{k}(\lambda+\varepsilon)=
\sum_{j=0}^{k}C(j,k,n)\varepsilon^{j}\sigma_{k-j}(\lambda),\indent \
C(j,k,n)=\frac{(^{n}_{k})(^{k}_{j})}{(^{n}_{k-j})},\lambda\in
\mathbb{R}^n,\varepsilon\in \mathbb{R}.
$$
to $\lambda+\varepsilon$; (see Section 5, \cite{Ivo}), we obtain
\begin{equation}\label{eq:2.9}
\lambda+\varepsilon\in \Gamma_{k}(n),\ \ \  \forall \varepsilon>0,
\lambda\in \overline{\Gamma_{k}(n)}.
\end{equation}
For any fixed $t$-tuple
$\{i_{1},i_{2},\cdots,i_{t}\}\subset\{1,2,\cdots,n\}$, we define
\begin{equation}\label{eq:2.10}
\sigma_{k;i_{1},i_{2},\cdots,i_{t}}(\lambda)=\frac{\partial^{t}
\sigma_{k+t}(\lambda)}{\partial\lambda_{i_{1}}\cdots
\partial\lambda_{i_{t}}}.
\end{equation}
 and then  for any $i\in\{1, 2, \cdots, n\}$,
\begin{equation}\label{eq:2.11}
\sigma_{k}(\lambda)=\lambda_{i}
\sigma_{k-1;i}(\lambda)+\sigma_{k;i}(\lambda),\ \ \  \forall
\lambda\in \mathbb{R}^n.
\end{equation}

Moreover, one can verify by \eqref{eq:2.11} that
$$
\sigma_{k;i_{1},i_{2},\cdots,i_{t}}(\lambda)=\sigma_{k}
(\lambda)|_{\lambda_{i_{1}}=\cdots=\lambda_{i_{t}}=0}.
$$
 Ivochkina \cite{Ivo} introduced
a cone defined by
\begin{equation}\label{eq:2.12}
\widetilde{\Gamma}_{k}(n)=\left\{\lambda\in\mathbb{R}^{n}|
\sigma_{k-l;i_1,\ldots,i_l}>0,l=0,1,\ldots,k,\right\}
\end{equation}
and has proved that, by her own notion of "stability set of
$k-$Hessian operator ", the cones $\Gamma_{k}(n)$ and
$\widetilde{\Gamma}_{k}(n)$ coincide.
We will take the original definition \eqref{eq:2.2} as the starting
point to prove that the three definitions above are equivalent.

\begin{theorem}\label{1p}
For the $k$-th elementary symmetric polynomial $\sigma_k(\lambda)$
with $e=(1,1,\ldots,1),$ the definitions
\eqref{eq:2.2},\eqref{eq:2.6} and \eqref{eq:2.12} are equivalent,
that is,
$$
\mathscr{C}(e, \sigma_k, n)=\Gamma_{k}(n)=\widetilde{\Gamma}_{k}(n).
$$
\end{theorem}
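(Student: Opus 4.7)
The plan is to prove the two equalities $\mathscr{C}(e,\sigma_k,n)=\Gamma_k(n)$ and $\Gamma_k(n)=\widetilde{\Gamma}_k(n)$, by verifying four separate inclusions; three of them are short and the fourth is the real obstacle.

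For $\Gamma_k(n)\subseteq\mathscr{C}(e,\sigma_k,n)$, I would use the expansion $\sigma_k(\lambda+se)=\sum_{j=0}^{k}C(j,k,n)\,s^j\,\sigma_{k-j}(\lambda)$ recalled just before \eqref{eq:2.9}: with $\sigma_j(\lambda)>0$ for $j=1,\ldots,k$, every coefficient is non-negative and the $j=0$ term is strictly positive, so $\sigma_k(\lambda+se)>0$ for all $s\geq 0$. For the reverse $\mathscr{C}(e,\sigma_k,n)\subseteq\Gamma_k(n)$, I would exploit real-rootedness: $Q(s):=\sigma_k(\lambda+se)$ is, up to the factor $(n-k)!$, the $(n-k)$-th derivative of $p(x)=\prod_{i=1}^n(x+\lambda_i)$, so by iterated Rolle it is real-rooted of degree $k$, and the hypothesis $Q>0$ on $[0,\infty)$ pushes all of its roots into $(-\infty,0)$. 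Applying Rolle once more to $Q'(s)=(n-k+1)\sigma_{k-1}(\lambda+se)$ places its $k-1$ roots in $(-\infty,0)$ as well, so $\sigma_{k-1}(\lambda+se)>0$ on $[0,\infty)$; iterating and setting $s=0$ yields $\sigma_j(\lambda)>0$ for every $j=1,\ldots,k$. Finally, $\widetilde{\Gamma}_k(n)\subseteq\Gamma_k(n)$ follows from the identity $\sum_{|I|=l}\sigma_{k-l;I}(\lambda)=\binom{n-k+l}{l}\,\sigma_{k-l}(\lambda)$, which upgrades positivity of each summand to positivity of $\sigma_{k-l}(\lambda)$ for $l=0,\ldots,k-1$.

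The main obstacle is $\Gamma_k(n)\subseteq\widetilde{\Gamma}_k(n)$. Given $\lambda\in\Gamma_k(n)$ and $I=\{i_1,\ldots,i_l\}\subseteq\{1,\ldots,n\}$ with $0\leq l\leq k-1$, I must show $\sigma_{k-l;I}(\lambda)>0$; Proposition \ref{prop2.1}(iii) only supplies the weak inequality $\geq 0$, so the job is to rule out equality. I would proceed by induction on $l$, the base $l=0$ being $\sigma_k(\lambda)>0$. Suppose for contradiction that $\sigma_{k-l;I}(\lambda)=0$; pick any $j\in I$, set $J=I\setminus\{j\}$, and invoke the inductive hypothesis at $J$ to obtain $\sigma_{k-l+1;J}(\lambda)>0$. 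The identity $\sigma_{k-l+1;J}(\lambda)=\lambda_j\,\sigma_{k-l;I}(\lambda)+\sigma_{k-l+1;I}(\lambda)$ (the direct analog of \eqref{eq:2.11} applied to the restricted vector), combined with $\sigma_{k-l;I}(\lambda)=0$, then collapses to $\sigma_{k-l+1;I}(\lambda)=\sigma_{k-l+1;J}(\lambda)>0$. On the other hand, Proposition \ref{prop2.1}(iii) yields $\sigma_{j';I}(\lambda)\geq 0$ for $j'=1,\ldots,k-l$, so the $(n-l)$-dimensional restricted vector lies in $\overline\Gamma_{k-l+1}(n-l)$; Maclaurin's inequality there forces
$$
\Bigl[\tfrac{\sigma_{k-l+1;I}(\lambda)}{\binom{n-l}{k-l+1}}\Bigr]^{1/(k-l+1)}\leq\Bigl[\tfrac{\sigma_{k-l;I}(\lambda)}{\binom{n-l}{k-l}}\Bigr]^{1/(k-l)}=0,
$$
contradicting $\sigma_{k-l+1;I}(\lambda)>0$. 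This closes the induction and completes the proof.
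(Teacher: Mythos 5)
Your proof is correct and reaches the same equivalence, but on the decisive inclusion $\Gamma_k(n)\subseteq\widetilde{\Gamma}_k(n)$ it takes a genuinely different route from the paper's. The paper derives the nonnegativity $\partial\sigma_j/\partial\lambda_1\geq 0$ ($1\le j\le k$) from G{\aa}rding's convexity inequality \eqref{eq:2.4}, upgrades the top one to $\sigma_{k-1;1}(\lambda)>0$ by a Maclaurin contradiction, concludes $(\lambda_2,\ldots,\lambda_n)\in\Gamma_{k-1}(n-1)$, and then peels coordinates one at a time in an induction that lowers $k$ and the dimension together. You instead take the weak inequalities $\sigma_{k-l;I}(\lambda)\geq 0$ wholesale from Proposition~\ref{prop2.1}(iii), upgrade to strict positivity by the same type of Maclaurin contradiction, and organize the induction directly on $l=|I|$, linking level $l$ to level $l-1$ via the reduction identity \eqref{eq:2.11}. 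Both arguments have Maclaurin's inequality as the engine; yours is shorter because it uses Proposition~\ref{prop2.1}(iii) as a black box, while the paper's re-derives the needed nonnegativity from G{\aa}rding's theory and is therefore more self-contained. Your averaging identity $\sum_{|I|=l}\sigma_{k-l;I}(\lambda)=\binom{n-k+l}{l}\sigma_{k-l}(\lambda)$ also gives a crisp justification of $\widetilde{\Gamma}_k(n)\subseteq\Gamma_k(n)$, which the paper dismisses as obvious. One small point to make explicit: Maclaurin's inequality \eqref{eq:2.7} is stated on the open cone, so invoking it for a vector in $\overline{\Gamma}_{k-l+1}(n-l)$ requires the $\varepsilon$-perturbation and limit argument via \eqref{eq:2.9} that the paper carries out; you should say a word about that. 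Your treatment of $\mathscr{C}(e,\sigma_k,n)=\Gamma_k(n)$ coincides in substance with the paper's Rolle/hyperbolicity argument.
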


\begin{proof} {\bf Step 1.} We will prove $\mathscr{C}(e, \sigma_k, n)= \Gamma_{k}(n). $ We first prove
$\mathscr{C}(e, \sigma_k, n)\subset \Gamma_{k}(n). $ By definition of hyperbolic polynomial,
$$
\sigma_n(\lambda)=\lambda_1\lambda_2\ldots \lambda_n
$$
is hyperbolic with respect to $e=(1,1,\ldots,1).$ Noticing
$$
\sigma_n(se+\lambda)=\sum_{j=0}^ns^{n-j}\sigma_j(\lambda),
$$
by the convention $\sigma_0(\lambda)=1,$ and
$$
\frac{d^{n-j}}{s^{n-j}}\sigma_n(se+\lambda)\mid_{s=0}=C(n,j)\sigma_j(\lambda).
$$
By using  \eqref{eq:2.1} and Rolle's Theorem again, we see that
$\sigma_k(\lambda), 1\leq k\leq n$ are hyperbolic with respect to
$e=(1,1,\ldots,1)$. Accordingly, by definition \eqref{eq:2.2} of
$\mathscr{C}(e, \sigma_j, n), (1\leq j\leq k)$ and letting $s=0$, we obtain
$$
\sigma_j(\lambda)>0,\ \ \ 1\leq j\leq k.
$$
This completes the proof $\mathscr{C}(e, \sigma_k, n)\subset \Gamma_{k}(n).$ Conversely, if
$\lambda\in \Gamma_{k}(n)$, since
$$
\sigma_k(se+\lambda)=\sum_{j=0}^{k}C(j,k,n)s^{j}\sigma_{k-j}(\lambda),\indent \
C(j,k,n)=\frac{(^{n}_{k})(^{k}_{j})}{(^{n}_{k-j})}
$$
and by definition of $\Gamma_{k}(n)$,
$$
\sigma_{k-j}(\lambda)>0, \ \ \ \ \ 0\leq j\leq k,
$$
 we see that $\sigma_k(se+\lambda)>0$ for all $s\geq0$ and therefore
 $\lambda\in \mathscr{C}(e, \sigma_k, n).$  This completes the proof of $\Gamma_{k}(n)\subset \mathscr{C}(e, \sigma_k, n).$

\noindent
{\bf Step 2.} We will prove that
$\widetilde{\Gamma}_{k}(n)=\Gamma_{k}(n).$ Obviously
$\widetilde{\Gamma}_{k}(n)\subset\Gamma_{k}(n).$ It is left to prove
$\mathscr{C}(e, \sigma_k, n)\subset\widetilde{\Gamma}_{k}(n).$ Let $\lambda\in \mathscr{C}(e, \sigma_k, n),$ we use
\eqref{eq:2.4} with $P(\lambda)=\sigma_j(\lambda) (1\leq j\leq k)$
and $\mu=(1,0,\ldots,0)=e_1$ and then obtain
\begin{equation}\label{eq:2.13}
\frac{\partial \sigma_j}{\partial \lambda_1}(\lambda)=(\nabla
\sigma_j,e_1)\geq 0,\ \ \ 1\leq j\leq k.
 \end{equation}
 We claim that
\begin{equation}\label{eq:2.14}
\frac{\partial \sigma_k}{\partial \lambda_1}(\lambda)>0, \ \ \ \
\forall \lambda \in \mathscr{C}(e, \sigma_k, n).
\end{equation}
Otherwise, we assume that $\frac{\partial \sigma_k}{\partial
\lambda_1}(\lambda^{0})=0$ for some $\lambda^0\in \mathscr{C}(e, \sigma_k, n).$ By
\eqref{eq:2.11},
$$\sigma_{k}(\lambda^0)=\lambda^0_{1}
\sigma_{k-1;1}(\lambda^0)+\sigma_{k;1}(\lambda^0)=\lambda^0_{1}
\frac{\partial \sigma_k}{\partial
\lambda_1}(\lambda^0)+\sigma_{k;1}(\lambda^0)=\sigma_{k;1}(\lambda^0),$$
from which we obtain, by \eqref{eq:2.6},
\begin{equation}\label{eq:2.15}
\sigma_{k-1;1}(\lambda^0)=\sigma_{k-1}(\lambda^0_2,\lambda^0_3,\ldots,\lambda^0_n)
=0,0<\sigma_{k}(\lambda^0)=\sigma_{k;1}(\lambda^0)=
\sigma_{k}(\lambda^0_2,\lambda^0_3,\ldots,\lambda^0_n).
 \end{equation}

Moreover, by virtue of \eqref{eq:2.13}, we obtain
$$
0\leq \frac{\partial \sigma_j}{\partial \lambda_1}(\lambda^0)
=\sigma_{j-1}(\lambda^0_2,\lambda^0_3,\ldots,\lambda^0_n),\ \ 1\leq
j\leq k.
$$
Therefore we have proved that $(\lambda^0_2,\lambda^0_3,\ldots,\lambda^0_n)\in \overline{\Gamma}_k(n-1).$
By \eqref{eq:2.9}, for $\varepsilon>0,$
$$
(\lambda^0_2+\varepsilon,\lambda^0_3+\varepsilon,\ldots,\lambda^0_n+\varepsilon)\in
\Gamma_k(n-1),
$$
to which we apply the Maclaurin's inequalities \eqref{eq:2.7} and obtain
$$
\left[\frac{1}{(^{n-1}_{k})}\sigma_{k}(\lambda^0_2+\varepsilon,\lambda^0_3+\varepsilon,\ldots,\lambda^0_n+\varepsilon)
\right]^{\frac{1}{k}}\leq
\left[\frac{1}{(^{n-1}_{l})}\sigma_{l}(\lambda^0_2+\varepsilon,\lambda^0_3+\varepsilon,\ldots,\lambda^0_n+\varepsilon)
\right]^{\frac{1}{l}}.
$$
Letting $\varepsilon\rightarrow 0^+,$ we have
$$
\left[\frac{1}{(^{n-1}_{k})}\sigma_{k}(\lambda^0_2,\lambda^0_3,\ldots,\lambda^0_n)
\right]^{\frac{1}{k}}\leq
\left[\frac{1}{(^{n-1}_{l})}\sigma_{l}(\lambda^0_2,\lambda^0_3,\ldots,\lambda^0_n)
\right]^{\frac{1}{l}},
$$
which contradicts with \eqref{eq:2.15} in case $l=k-1$, thus the claim \eqref{eq:2.14} is true. Notice that
$$
\frac{\partial \sigma_k}{\partial \lambda_1}(\lambda)=\sigma_{k-1}(\lambda_2,\lambda_3,\ldots,\lambda_n),
$$
then applying the Maclaurin's inequality to \eqref{eq:2.13}-\eqref{eq:2.14} leads to
$$
(\lambda_2,\lambda_3,\ldots,\lambda_n)\in \Gamma_{k-1}(n-1).
$$
Now we can regard that $(\lambda_2,\lambda_3,\ldots,\lambda_n)$  is in the same
position of $(\lambda_1,\lambda_2,\ldots,\lambda_n)$ as above, by an
induction on $k$ we can prove
$$
\sigma_{k-l;i_1,\ldots,i_l}(\lambda)>0,l=0,1,\ldots,k.
$$
That is $\Gamma_k(n)\subset \widetilde{\Gamma}_k(n).$
\end{proof}

For the G{\aa}rding cone in the space of symmetric matrices, similar
results to \eqref{eq:2.14} can be seen Section 3 in \cite{IPY}.

The following is some by-product of \eqref{eq:2.12}. Assume that
$\lambda\in\Gamma_{k}(n)$ is in descending order,
$$
\lambda_{1}\geq \cdots\lambda_{p-1}\geq \lambda_{p}>0\geq
\lambda_{p+1}\geq \cdots\lambda_{n},
$$
then
\begin{equation}\label{eq:2.16}
\left\{
\begin{array}{l}
p\geq k \\
 0<\sigma_{k-1;1}(\lambda)\leq \sigma_{k-1;2}(\lambda)\leq
\cdots\leq \sigma_{k-1,n}(\lambda).
\end{array}\right.
\end{equation}
Otherwise, if $p<k,$ we have
$$
\sigma_{1;\lambda_1,\lambda_2,\ldots,\lambda_{k-1}}=
\sigma_{1}(\lambda_{k},\lambda_{k+1},\ldots,\lambda_n)=\sum_{j=k}^n\lambda_j\leq0,
$$
which contradicts with \eqref{eq:2.12}. Using \eqref{eq:2.12} again,
we have $\sigma_{k-2;12}(\lambda)\geq 0$ and then
\begin{equation*}
\begin{split}
\frac{\partial \sigma_k}{\partial
\lambda_1}(\lambda)&=\sigma_{k-1;1}(\lambda)
=\sigma_{k-1;12}(\lambda)+\lambda_2\sigma_{k-2;12}(\lambda) \\
&\leq\sigma_{k-1;12}(\lambda)+\lambda_1\sigma_{k-2;12}(\lambda)=\sigma_{k-1;2}(\lambda)=\frac{\partial
\sigma_k}{\partial \lambda_2}(\lambda),
\end{split}
\end{equation*}
the remaining part of \eqref{eq:2.16} can be proved similarly. Here
the proof of \eqref{eq:2.16} is adapted  from \cite{LT}.

\smallskip
\noindent {\bf Acknowledgements.} The research of first author is
supported by the National Science Foundation of China No.11171339
and Partially supported by National Center for Mathematics and
Interdisciplinary Sciences. The research of the second author and
the last author is supported partially by ``The Fundamental Research
Funds for Central Universities''  and the National Science Foundation
of China No. 11171261.


\begin{thebibliography}{\small}

\bibitem{CNS1}
    L. Caffarelli, L. Nirenberg, J. Spruck, Dirichlet problem for nonlinear
    second order elliptic equations,  I : Monge-Amp\`{e}re equations, {\em Comm. Pure
    Appl. Math.}, {\bf 37}(1984), 369-402.


\bibitem{Ga} L. G{\aa}rding, An inequality for hyperbolic polynomials, {\em J. Math.
          Mech.},  {\bf 8} (1959), 957-965.

\bibitem{GT}
    D. Gilbarg and N. S. Trudinger, Elliptic partial differential equations of  second order, Springer, 1983.

\bibitem{GS}
    B. Guan, J. Spruck, Locally convex hypersurfaces of constant curvature with boundary,
    {\em Comm. Pure Appl. Math.} 57 (2004) 1311-1331.

\bibitem{Han}
 Qing Han, Local solutions to a class of Monge-Amp\`{e}re equations of
 mixed type,  {\em Duke Math. J.}, {\bf 136} 3 (2007), 401-618.


\bibitem{HH}
     Q. Han,J.-X.,Hong,  Isometric embeddings of Riemannian manifolds in Euclidean spaces ,
     Mathematical Surveys and Monographs,Volume 130,
     American Mathematical Society, Providence, RI, 2006.

\bibitem{HZ}
     J. Hong, C. Zuily, Exitence of $C^{\infty}$ local solutions for the Monge-Amp\`{e}re equation,
      {\em Invent.Math.}, {\bf 89}(1987), 645-661.

\bibitem{Ivo}
     N. M. Ivochkina , A description of the stability cones generated by
      differential operators of Monge-Amp\`{e}re type,  {\em English transl.:
     Math. USSR Sbornik,} 1985, 50(1): 259-268.

\bibitem{IPY}
     N. M. Ivochkina, S. I. Prokofeva, G. V. Yakunina , The G{\aa}rding cones in the modern theory of fully nonlinear
     second order differential equations,
    {\em   Journal of Mathematical Sciences}, {\bf 184} (2012)  295-315.

\bibitem{Li}
    G. M. Lieberman, Second Order Parabolic Differential Equations,
    World Scientific Publishing Co.Pte. Ltd., 1996.

\bibitem{Lin}
     C. S.  Lin,  The local isometric embedding in $\mathbb{R}^{3}$  of 2-dimensional Riemannian manifolds with nonnegative curvature.
    {\em  Journal of Differential Geometry} {\bf 21} (1985): 213-230.

\bibitem{Lin1} C.S. Lin, The local isometric embedding in $\mathbb{R}^{3}$
of two dimensinal Riemannian manifolds with Gaussian curvature changing sign clearly,
 {\em Comm. Pure Appl. Math.,} 39(1986),867-887.

\bibitem{LT}M. Li, N. S. Trdudinger, On some inequalities for elementary symmetric functions,
           {\em Bull. Austral. Math. Soc.} {\bf 50}(1994), 317-326.

\bibitem{TWX}G. Tian, Q. Wang and C.-J. Xu, $C^{\infty}$ local solutions of elliptical $2$-Hessian equation
in $\mathbb{R}^{3}$. Preprint :  http://arxiv.org/pdf/1406.3613.pdf


\bibitem{W}X.-J. Wang, The $k$-Hessian equation, Lecture Notes in
    Math.,Springer, Dordrecht (2009).

\bibitem{WX} Q. Wang, C.-J. Xu , $C^{1,1}$ solution of the Dirichlet problem for degenerate k-Hessian equations, {\em Nonlinear Analysis} 104 (2014) 133-146.

\end{thebibliography}
\end{document}